\documentclass[11pt,amsfonts]{article}

\usepackage[colorlinks=true, citecolor=red, linkcolor=blue]{hyperref}
\usepackage[utf8]{inputenc}
\usepackage[T1]{fontenc} 
\usepackage{tikz}
\usepackage[english]{babel} 
\usepackage{mathrsfs}
\usepackage{amssymb}
\usepackage[cmex10]{amsmath}
\usepackage{amsthm}
\usepackage{wrapfig}
\usepackage{latexsym}
\usepackage{amsfonts}
\usepackage{color}
\usepackage[font=small, labelfont=bf]{caption}
\usepackage{hyperref} 
\usepackage{enumitem}
\usepackage{ctable}
\usepackage{floatrow}
\usepackage{verbatim}
\usepackage{epstopdf}
\usepackage{float}
\usepackage{appendix}

\usepackage[top=2cm , bottom=2cm , left=2.5cm ,right=2.5cm ]{geometry}

\usepackage{dsfont}
\usepackage{amsmath, amssymb}

\usepackage{cases}

\theoremstyle{plain}
\newtheorem{definition}{Definition}
\newtheorem{theorem}{Theorem}

\newtheorem{proposition}{Proposition}

\newtheorem{lemma}{Lemma}
\newtheorem{remark}{Remark}
\definecolor{assumplabelcolor}{RGB}{0,150,0}

\newtheoremstyle{assumpstyle}
  {3pt}{3pt}{\itshape}{}{\bfseries}{.}{0.5em}
  {\thmname{#1}~{\color{assumplabelcolor}\thmnote{#3}}}

\theoremstyle{assumpstyle}
\newtheorem{assumptionmanual}{Assumption}
\newenvironment{assumption}[1]
  {\renewcommand\theassumptionmanual{#1}\assumptionmanual[#1]}
  {\endassumptionmanual}

\newlist{assumpdesc}{description}{1}
\setlist[assumpdesc]{labelwidth=1cm, leftmargin=!, labelindent=0cm}

\newcommand{\assumptionitem}[2]{%
  \item[\textbf{(#1)}]\label{#2}%
}

\numberwithin{equation}{section}
\numberwithin{theorem}{section}
\numberwithin{proposition}{section}
\numberwithin{definition}{section}
\numberwithin{remark}{section}
\numberwithin{corollary}{section}
\numberwithin{lemma}{section}

\usepackage{graphicx}

\makeatletter
\newcommand*\bigcdot{\mathpalette\bigcdot@{.5}}
\newcommand*\bigcdot@[2]{\mathbin{\vcenter{\hbox{\scalebox{#2}{$\m@th#1\bullet$}}}}}
\makeatother
\begin{document} 
	\sloppy
	
	\begin{center}
        {\Large \textbf{Mean Field Games with Reflected Dynamics: Penalization and Relaxed Control Approach}} \\[0pt]
		~\\[0pt]  Ayoub Laayoun, Badr Missaoui 
        \renewcommand{\thefootnote}{}

        \footnotetext{Mohammed VI Polytechnic University, Rabat, Morocco.}
        \footnotetext{E-mails: \text{ayoub.laayoun@um6p.ma, badr.missaoui@um6p.ma}}
        	\renewcommand{\thefootnote}{\arabic{footnote}}

	\end{center}
	
	\renewcommand{\thefootnote}{\arabic{footnote}}
\begin{abstract}
In this paper, we investigate a class of Mean Field Games (MFGs) in which the state dynamics are governed by multidimensional reflected stochastic differential equations (SDEs). We establish the existence of an equilibrium and show that it can be approximated by the equilibrium of MFGs with non-reflected SDE. This approximation is constructed via a penalization method combined with the relaxed control approach introduced in \cite{laker}. Under a uniform ellipticity condition, and by applying the penalization method together with the mimicking theorem, we prove the existence of a Markovian MFG. Furthermore, under an additional convexity assumption, we demonstrate the existence of a strict-Markovian MFG. In the general case, we prove that relaxed MFG solutions with reflected dynamics can be approximated by strict controls whose dynamics are governed by penalized SDEs.
\end{abstract}
{\bf Keywords}: Mean field games; Relaxed control; Compactification method; Reflected stochastic differential equations; Penalization method. \\
\textbf{MSC2020:} 49N80, 60H30, 93E20, 60G07 .
\section{Introduction}
Mean Field Games offer an optimization-based framework to address the complexity of large-population decision problems by approximating the collective behavior of many interacting agents through ideas from statistical physics. Introduced independently by Lasry and Lions \cite{LasryP, lasry2, lasry3} and by Huang et al. \cite{malhame1, malhame2}, MFGs arise as the limiting case of symmetric stochastic differential $N$-player games when $N \to \infty$, where each agent interacts weakly with the population via its empirical distribution. In this limit, the analysis simplifies while still yielding approximate Nash equilibria for the original finite game. The framework relies on the representative agent approach, where the empirical distribution of states is approximated by a deterministic, measure-valued process, leading to an optimization problem for a single player whose optimal strategy must be consistent, in a fixed-point sense, with the collective distribution of all agents.

In this paper, we study a class of MFGs where the state dynamics are governed by a multidimensional reflected SDE. The formulation proceeds as follows:

\begin{equation} \label{R_MFG_dis}
\left\{
\begin{aligned}
&\text{(1) Fix a flow of measures } 
\mu_t: [0, T] \to \mathcal{P}(\mathbb{R}^d) ;\ \\[1ex]
&\text{(2) solve the corresponding stochastic control problem} \\[1ex]
&\quad 
\inf_{\alpha} \mathbb{E} \left(
    \int_0^T f(t, X_t, \mu_t, \alpha_t)\, \mathrm{d}t
    + \int_0^T h(t, X_t, \mu_t)\, \mathrm{d}K_t
    + g(X_T, \mu_T)
\right), \\[1ex]
&\text{subject to the reflected SDE} \\[1ex]
&\quad
\begin{cases}
\mathrm{d}X_t+ \mathrm{d}K_t = b(t, X_t, \mu_t, \alpha_t)\, \mathrm{d}t
+ \sigma(t, X_t, \mu_t, \alpha_t)\, \mathrm{d}B_t
, \\[0.5ex]
\text{Law}(X_0) = \lambda, \quad X_t \in \bar{D}\ \text{a.s.}, \\[0.5ex]
K_t = \displaystyle\int_0^t \eta(X_s)\, \mathrm{d}|K|_s, \quad
|K|_t = \displaystyle\int_0^t \mathbb{1}_{\{X_s \in \partial D\}}\, \mathrm{d}|K|_s < \infty;
\end{cases} \\[2ex]
&\text{(3) solve the fixed point problem }  
\text{Law}(X) = \mu, \text{where X is the optimal state process from 2}.
\end{aligned}
\right.
\end{equation}

A flow $(\mu_t)$ that satisfies the consistency condition in point~3 is referred to as a MFG solution.

The MFG problem has been investigated through three main approaches. The first, proposed by Lasry and Lions~\cite{LasryP}, is an analytical method based on solving a coupled system of forward--backward partial differential equations. The backward equation corresponds to the Hamilton--Jacobi--Bellman equation derived from the individual optimization problem, while the forward equation is a Kolmogorov--Fokker--Planck equation describing the evolution of the population distribution. The second approach is probabilistic, relying on the stochastic Pontryagin maximum principle to formulate the fixed-point problem as a system of McKean--Vlasov forward--backward stochastic differential equations; see, for example,~\cite{probaappr1, probaappr2, delanaly}. The third approach, introduced by Lacker~\cite{laker}, is based on the weak formulation of stochastic control, employing relaxed controls and martingale problems. Using compactness arguments and Berge’s maximum theorem, Lacker established the upper hemi-continuity of the best-response correspondence and applied the Kakutani--Fan--Glicksberg fixed-point theorem to prove the existence of a measure-valued process~$\mu$ consistent with the agents’ optimal responses. More recently, this weak formulation has been extended to the study of MFGs with reflected dynamics; see~\cite{MFG_R_1, MFG_R_j, badr}.

To analyze the MFG system in \eqref{R_MFG_dis}, we consider a penalized version without reflection, denoted by $MFG_n$, defined as follows for each $n \geq 1$:
\begin{equation} \label{MFG_n_dis}
\left\{
\begin{aligned}
&\text{(i) Fix a flow of measures } 
\mu_t^n: [0, T] \to \mathcal{P}(\mathbb{R}^d); \\[1ex]
&\text{(ii) Solve the associated stochastic control problem:} \\[1ex]
&\quad 
\inf_{\alpha} \mathbb{E} \left(
    \int_0^T 
        f(t, X_t^n, \mu_t^n, \alpha_t)
        + n\, h(t, X_t, \mu_t)\, (X_t^n - \pi_{\bar{D}}(X_t^n))
    \, \mathrm{d}t 
    + g(X_T^n, \mu_T^n)
\right); \\[1ex]
&\text{subject to the penalized (non-reflected) dynamics:} \\[1ex]
&\quad
\begin{cases}
\mathrm{d}X_t^n = b(t, X_t^n, \mu_t^n, \alpha_t)\, \mathrm{d}t 
- n\big(X_t^n - \pi_{\bar{D}}(X_t^n)\big)\, \mathrm{d}t 
+ \sigma(t, X_t^n, \mu_t^n, \alpha_t)\, \mathrm{d}B_t, \\[0.5ex]
\text{Law}(X_0^n) = \lambda;
\end{cases} \\[2ex]
&\text{(iii) Solve the fixed-point problem: } 
\text{Law}(X_t^n) = \mu_t^n, \quad \forall\, t \in [0, T].
\end{aligned}
\right.
\end{equation}

When there is no control $ \alpha $ and the coefficients $ b $ and $ \sigma $ do not depend on the measure flow $ \mu $, it is well known that the penalized processes $ X^n $ in point (ii) of \eqref{MFG_n_dis} converge in law to the weak solution $ X $ of the reflected SDE described in point 2 of \eqref{R_MFG_dis}. Under Lipschitz conditions on $ b $ and $ \sigma $, Lions et al.~\cite{Lion-no-cont} showed that
\begin{equation*}
\mathbb{E}\Big( \sup_{t \in [0,T]} |X_t^n - X_t|^2 \Big) \to 0.
\end{equation*}
Bahlali et al.~\cite{bahlali-PDE} later proved that when $ b $ and $\sigma$ are only continuous, the pair
\begin{equation*}
\left(X^n,\; K^n := \int_0^\cdot n\big(X_s^n - \pi_{\bar{D}}(X_s^n)\big)\,\mathrm{d}s\right)
\end{equation*}
still converges in law to $ (X, K) $ with respect to the uniform topology.

For the controlled setting, without the interaction term $\mu$, the penalized boundary term $h$, and the terminal cost $g$, Menaldi~\cite{menaldi} proved that
\begin{equation*}
\mathbb{E}\Big( \sup_{t \in [0,T]} |X_t^n - X_t|^p \Big) \to 0, \qquad 1 \le p < \infty,
\end{equation*}
where $ X^n $ is the optimal state for the penalized control problem in point (ii) of \eqref{MFG_n_dis} and $ X $ is the optimal state for the reflected problem in point 2 of \eqref{R_MFG_dis}. The convergence holds uniformly in the control $ \alpha $. He also showed that the optimal cost for the penalized problem converges to that of the reflected control problem.

In our work, we consider a MFG involving both control and mean field interactions, where the dynamics are reflected on the boundary of a convex open subset $D$ of $\mathbb{R}^d$. We extend the penalization ideas from \cite{Lion-no-cont, menaldi} to build a solution to \eqref{R_MFG_dis}. For each $ n $, we use Lacker’s method \cite{laker} to prove the existence of an $MFG_n$ solution $ \mu^n $ to the penalized problem \eqref{MFG_n_dis}. From these solutions, we construct a solution to the reflected problem \eqref{R_MFG_dis} and show that $ \mu^n \to \mu $, where $\mu $ is an MFG solution of \eqref{R_MFG_dis}. The argument relies on tools such as tightness and convergence of the associated processes.

This paper is organized as follows. In Section~\ref{sec-main-result}, we recall the notion of the canonical space and introduce relaxed controls for stochastic control problems governed by systems of reflected SDEs and their penalized counterparts. In the same section, we present the formulation of MFGs for both the reflected and penalized SDE frameworks, state the assumptions on the model coefficients, and establish our first main result (Theorem~\ref{MFGR}). 
Section~\ref{section-proof} is devoted to the proof of this theorem. 
Section~\ref{sec-mark} presents our second main result: the existence of Markovian and strict Markovian MFG solutions under a convexity assumption (Theorem~\ref{theorem:Markovian}). 
Finally, in Section~\ref{sec-appro}, we show that, in the general setting without the convexity assumption, a relaxed MFG solution with reflected dynamics can be approximated by strict controls with penalized SDE dynamics. This constitutes our third main result (Theorem~\ref{stric-Non refle-equi relaxed}).

\section{Assumptions and Existence of MFG Solutions}\label{sec-main-result}

Throughout this paper, for any set $A \subset \mathbb{R}^d$, we denote by $\mathcal{C}^d(\bar{A})$ the space of continuous functions from $[0,T]$ to $\bar{A}$, and by $\mathcal{A}^d(\bar{A}) \subset \mathcal{C}^d(\bar{A})$ the subset of functions with bounded variation. Both spaces are endowed with the topology of uniform convergence. In addition, we write $\mathcal{C}^d := \mathcal{C}^d(\mathbb{R}^d)$. 

Let $(\mathcal{Z}, \rho)$ be a metric space. We denote by $\mathcal{P}(\mathcal{Z})$ the set of all probability measures on the measurable space $(\mathcal{Z}, \mathcal{B}(\mathcal{Z}))$, where $\mathcal{B}(\mathcal{Z})$ stands for the Borel $\sigma$-field of $\mathcal{Z}$. The space $\mathcal{P}(\mathcal{Z})$ is equipped with the topology of weak convergence of measures.  \\
For any $p \ge 1$, define
\begin{equation*}
\mathcal{P}_p(\mathcal{Z}) := \left\{ \mathbb{P} \in \mathcal{P}(\mathcal{Z}) : \int_{\mathcal{Z}} \rho(x, x_0)^p \, \mathbb{P}(dx) < \infty \text{ for some (hence all) } x_0 \in \mathcal{Z} \right\}.
\end{equation*}
The space $\mathcal{P}_p(\mathcal{Z})$ is endowed with the $p$-Wasserstein metric
\begin{equation*}
W_{\mathcal{Z}, p}(\mu, \nu)
= \inf_{\pi \in \Pi(\mu, \nu)}
\left( \int_{\mathcal{Z} \times \mathcal{Z}} \rho(x, y)^p \, \pi(dx, dy) \right)^{1/p},
\quad \mu, \nu \in \mathcal{P}_p(\mathcal{Z}),
\end{equation*}
where
\begin{equation*}
\Pi(\mu, \nu) := \{ \pi \in \mathcal{P}(\mathcal{Z} \times \mathcal{Z}) : \pi \text{ has marginals } \mu \text{ and } \nu \}.
\end{equation*}

We denote by $\mathcal{U}$ the set of all probability measures on $[0,T] \times U$ whose first marginal coincides with the Lebesgue measure on $[0,T]$ and whose second marginal is a probability measure on $U$. The space $\mathcal{U}$ is equipped with a metric
\begin{equation*}
d_{\mathcal{U}}(q_1, q_2)
:= W_{[0,T] \times U, 2}\left(\frac{q_1}{T}, \frac{q_2}{T}\right),
\end{equation*}
where $W_{[0,T] \times U, 2}$ denotes the 2-Wasserstein distance.  
Since $U$ is compact under assumption~\hyperlink{A.6}{(A.6)}, it follows that the metric space $(\mathcal{U}, d_{\mathcal{U}})$ is compact and therefore complete and separable.

Let $(X, K)$ be a solution of the reflected SDE
\begin{equation} 
\label{refl-SDE-sim}
\begin{cases}
\mathrm{d}X_t + \mathrm{d}K_t
= B(t, X_t)\, \mathrm{d}t
+ A(t, X_t)\, \mathrm{d}t
, \\[1.1em]
\mathrm{Law}(X_0) = \lambda, \quad X_t \in \bar{D}, \ \forall\, t \ge 0, \\[0.4em]
|K|_t = \displaystyle\int_0^t \mathbf{1}_{\{ X_s \in \partial D \}}\, \mathrm{d}|K|_s < \infty, 
\quad 
K_t = \displaystyle\int_0^t \eta(X_s)\, \mathrm{d}|K|_s,
\end{cases}
\end{equation}
where $\eta(x)$ denotes the unit inward normal to $\partial D$ at $x \in \partial D$, and $K$ is a bounded variation process that ensures $X_t \in \bar{D}$.\\
For simplicity, we will often use the compact notation
\begin{equation*}
\mathrm{d}X_t+ \mathrm{d}K_t 
= B(t, X_t)\, \mathrm{d}t
+ A(t, X_t)\, \mathrm{d}t
,
\end{equation*}
with the reflection conditions on $K$ implicitly understood.

We introduce the following notation for any set $A \subset \mathbb{R}^d$:

\begin{itemize}
    \item \textbf{Canonical space:}
    \begin{equation*}
        \Omega[\bar{A}] := \mathcal{C}^d(\bar{A}) \times \mathcal{U} \times \mathcal{A}^d(\bar{A}).
    \end{equation*}

    \item \textbf{$\sigma$-algebra:}
    \begin{equation*}
        \mathcal{F}[\bar{A}] := \mathcal{F}^{\mathcal{C}^d(\bar{A})} \times \mathcal{F}^{\mathcal{U}} \times \mathcal{F}^{\mathcal{C}^d(\bar{A})},
    \end{equation*}
    where $\mathcal{F}^{\mathcal{C}^d(\bar{A})}$ and $\mathcal{F}^{\mathcal{U}}$ denote the Borel $\sigma$-algebras of $\mathcal{C}^d(\bar{A})$ and $\mathcal{U}$, respectively.

    \item \textbf{Coordinate projections $X$, $Q$, and $K$:} For each $\omega = (x, q, k) \in \Omega[\bar{A}]$,
    \begin{equation*}
        X(\omega) = x, \quad Q(\omega) = q, \quad K(\omega) = k.
    \end{equation*}
    According to \cite[Lemma 3.2]{laker}, the measure $Q$ admits a predictable disintegration:
    \begin{equation*}
        Q(\mathrm{d}t, \mathrm{d}u) = Q_t(\mathrm{d}u) \, \mathrm{d}t.
    \end{equation*}

    \item \textbf{Filtration:} $\mathbb{F} := (\mathcal{F}_t)_{t \in [0,T]}$ with
    \begin{equation*}
        \mathcal{F}_t := \mathcal{F}_t^X \otimes \mathcal{F}_t^Q \otimes \mathcal{F}_t^K,
    \end{equation*}
    where
    \begin{align*}
        \mathcal{F}_t^X &= \sigma(X_s : s \leq t), \\
        \mathcal{F}_t^Q &= \sigma(Q(F) : F \in \mathcal{B}([0,t] \times U)), \\
        \mathcal{F}_t^K &= \sigma(K_s : s \leq t).
    \end{align*}
\end{itemize}
\begin{assumption}{\textbf{(A)}}
\label{ass:A}
\begin{assumpdesc}
\assumptionitem{A.1}{A.1}
\label{A.1}
The functions $ b, \sigma, f, $ and $ g $ of $(t, x, \mu, u)$ are measurable in $ t $ and continuous in  $(x, \mu, u) $.
\assumptionitem{A.2}{A.2}
\label{A.2}
The function $h$ is continuous in $(t, x, \mu)$.
\assumptionitem{A.3}{A.3}
\label{A.3}
There exist constants $C_1, C_2 > 0$ such that, for all $(t, u) \in [0, T] \times U$, $x, y \in \mathbb{R}^d$, and $\mu, \nu \in \mathcal{P}_2(\mathbb{R}^d)$,
\begin{equation*}
|b(t, x, \mu, u) - b(t, y, \nu, u)| + |\sigma(t, x, \mu, u) - \sigma(t, y, \nu, u)| 
\leq C_1\big( |x - y| + W_{\mathbb{R}^{d},2}(\mu, \nu) \big),
\end{equation*}
\begin{equation*}
|b(t, x, \mu, u)| \leq C_2\Big(1+|x|+\Big(\int_{\mathbb{R}^d} |z|^2 \mu(\mathrm{d}z)\Big)^{1/2}\Big),
\end{equation*}
and
\begin{equation*}
 |\sigma\sigma^\top(t, x, \mu, u)| \leq C_2\Big(1+|x|^2+\int_{\mathbb{R}^d} |z|^2 \mu(\mathrm{d}z)\Big).
\end{equation*}
Here, $\sigma^\top$ denotes the transpose of $\sigma$.
\assumptionitem{A.4}{A.4}
\label{A.4}
There exist strictly positive constants $C_3$ and $C_4$ such that, for all $(t, x, \mu, u)$,
\begin{equation*}
|g(x, \mu)| \leq C_4 \left( 1 + |x|^2 + \int_{\mathbb{R}^d} |z|^2 \mu(\mathrm{d}z) \right),
\end{equation*}
\begin{equation*}
|h(t, x, \mu)| \leq C_4\left(1+|x|+\left(\int_{\mathbb{R}^d} |z|^2 \mu(\mathrm{d}z)\right)^{1/2}\right),
\end{equation*}
and
\begin{equation*}
|f(t, x, \mu, u)| \leq C_4 \left( 1 + |x|^2 + \int_{\mathbb{R}^d} |z|^2 \mu(\mathrm{d}z) \right).
\end{equation*}
\assumptionitem{A.5}{A.5}
\label{A.5}
The initial law $\lambda$ belongs to $\mathcal{P}_{q'}(\bar{D})$ for some $q' > 2$.
\assumptionitem{A.6}{A.6}
\label{A.6}
The control space $ U $ is compact subset $U\subset\mathbb{R}^N$, $N\geq 1$.
\assumptionitem{A.7}{A.7}
\label{A.7}
The state space $D$ is a convex open subset of $\mathbb{R}^d$ such that $0 \in D$.
\end{assumpdesc}
\end{assumption}
\begin{remark}
For $\Psi \in \{\sigma \sigma^\top, f \}$ and corresponding constants $C' \in \{ C_2, C_4 \}$, the growth bounds in  Assumptions \hyperlink{A.3}{(A.3)} and \hyperlink{A.4}{(A.4)} can be relaxed to allow an additional dependence on the control variable $u$ as follows:
\begin{align*}
&|\Psi(t, x, \mu, u)| \leq C' \left( 1 + |x|^2 + \int_{\mathbb{R}^d} |z|^2 \, \mu(\mathrm{d}z) + \phi(u) \right) \\
&|b(t, x, \mu, u)|\leq C_2 \left( 1 + |x|+ \left(\int_{\mathbb{R}^d} |z|^2 \, \mu(\mathrm{d}z)\right)^{1/2} + \phi(u) \right)
\end{align*}
for some continuous function $(\phi^1, \phi^2, \phi^3) \ni\phi : U \to \mathbb{R}_+$.  
However, since $U$ is compact by Assumption \hyperlink{A.6}{(A.6)}, the function $\phi$ is bounded on $U$. Consequently, the bounds in Assumptions \hyperlink{A.3}{(A.3)} and \hyperlink{A.4}{(A.4)} can be taken to hold uniformly in $u$. 
\end{remark}
\subsection{Relaxed Formulation of the MFG with Penalized Dynamics}

Following the relaxed control framework developed in \cite{laker}, we define, for each $ n \geq 1 $, the relaxed control associated with the penalized dynamics. The corresponding control problem for the MFG with penalized dynamics is given by
\begin{equation} 
\begin{cases}
\displaystyle \inf_{\alpha} \mathbb{E} \left( \int_0^T f^n(t, X_t^n, \mu_t^n, \alpha_t) \, \mathrm{d}t + g(X_T^n, \mu_T^n) \right), \\
\text{subject to:} \\
\mathrm{d}X_t^n = b^n(t, X_t^n, \mu_t^n, \alpha_t) \, \mathrm{d}t + \sigma(t, X_t^n, \mu_t^n, \alpha_t) \, \mathrm{d}B_t, \\
\text{Law}(X_0^n) = \lambda,
\end{cases}
\end{equation}
where the penalized coefficients are defined by
\begin{equation*}
b^n(t, x, \mu, \alpha) := b(t, x, \mu, \alpha) - n(x - \pi_{\bar{D}}(x)), \quad f^n(t,x,\mu,\alpha) := f(t,x,\mu,\alpha) + n h(t,x, \mu)(x - \pi_{\bar{D}}(x)).
\end{equation*}
\begin{remark}\label{Penalized-coeff}
For each $ n \geq 1 $, the penalized drift $ b^n $ is Lipschitz continuous and satisfies the same growth condition as in Assumption \hyperlink{A.3}{(A.3)}, with a constant $ C^1_n $. Likewise, the penalized running cost $ f^n $ is continuous in $ (x, \mu, u) $ and satisfies the same condition as in Assumption \hyperlink{A.4}{(A.4)}, with a constant $ C^2_n $.
\end{remark}

\begin{definition} \label{penalized-relaxed} 
For each $ n \geq 1 $ and $ \mu^n \in \mathcal{P}_2(\mathcal{C}^d) $, we define the set of admissible relaxed controls:
\begin{equation*}
\mathcal{R}_n(\mu^n) := \left\{ P^n \in \mathcal{P}(\Omega[\mathbb{R}^d]) \,:\, \text{conditions (i)–(ii) hold} \right\},
\end{equation*}
where:
\begin{description}
\item[i)] $ P^n \circ X_0^{-1} = \lambda $;
\item[ii)] For every $ \phi \in \mathcal{C}_b^2(\mathbb{R}^d) $, the process $ \mathcal{M}^{\mu, \phi, n} $ defined by
\begin{equation*}
\mathcal{M}^{\mu, \phi, n}_t := \phi(X_t) - \int_0^t \int_U \mathcal{L}^n \phi(s, X_s, \mu^n_s, u) Q_s(\mathrm{d}u) \mathrm{d}s
\end{equation*}
is an $(\mathbb{F}, P^n)$-martingale, with the operator
\begin{equation*}
\mathcal{L}^n \phi(t, x, \mu, u) = b^n(t, x, \mu, u)^\top D\phi(x) + \frac{1}{2} \operatorname{Tr} \left( \sigma \sigma^\top(t, x, \mu, u) D^2\phi(x) \right).
\end{equation*}
\end{description}
\end{definition}
The cost functional associated with $ P^n \in \mathcal{R}_n(\mu^n) $ is defined as
\begin{equation*}
J^n(\mu^n, P^n) := \mathbb{E}^{P^n} \left( \int_0^T \int_U f^n(t, X_t, \mu^n_t, u) Q_t(\mathrm{d}u) \mathrm{d}t + g(X_T, \mu^n_T) \right).
\end{equation*}

We also define the (possibly empty) set of optimal admissible laws
\begin{equation*}
\mathcal{R}_n^*(\mu^n) := \arg \min_{P^n \in \mathcal{R}_n(\mu^n)} J^n(\mu, P^n).
\end{equation*}

By Proposition~3.5 in \cite{laker}, for each $ n \geq 1 $  and corresponding measure $  \mu^n $ , 
the set $  \mathcal{R}_n(\mu^n) $  coincides with the collection of laws of weak solutions 
to a SDE driven by martingale measures. 
We adopt this representation throughout our framework.

\begin{proposition}\label{rep martingale for nMFG}
A probability measure $  P^n \in \mathcal{R}_n(\mu^n) $  if and only if there exists a filtered probability space $  (\Omega^n, \mathcal{F}^n_t, \mathbb{Q}^n) $  supporting:
\begin{itemize}
    \item a $  d $ -dimensional $  \mathcal{F}^n_t $ -adapted process $  X^n $ ,
    \item a collection $  M^n = (M^n_1, \ldots, M^n_m) $  of $  m $  orthogonal martingale measures on $ U \times [0, T] $ , with intensity $  Q^n_t(\mathrm{d}a)\,\mathrm{d}t $ ,
\end{itemize}
such that $  \mathbb{Q}^n \circ (X^n, Q^n)^{-1} = P^n $ , and the following state equation is satisfied:
\begin{equation*}
\mathrm{d}X_t^n = \int_U b^n(t, X_t^n, \mu^n_t, u)\, Q_t^n(\mathrm{d}u)\, \mathrm{d}t + \int_U \sigma(t, X_t^n, \mu^n_t, u)\, M^n(\mathrm{d}u, \mathrm{d}t).
\end{equation*}
Under standard Lipschitz and growth assumptions, this equation admits a unique strong solution on any such filtered probability space.
\end{proposition}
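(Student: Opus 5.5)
The plan is to deduce the proposition from the general equivalence between relaxed martingale problems and weak solutions of SDEs driven by martingale measures, as in \cite[Proposition 3.5]{laker} (itself based on El Karoui--Méléard). The key point is that, for fixed $n$ and fixed $\mu^n\in\mathcal{P}_2(\mathcal{C}^d)$, the flow $t\mapsto\mu^n_t$ is a deterministic input. The time-dependent coefficients $\tilde b^n(t,x,u):=b^n(t,x,\mu^n_t,u)$ and $\tilde\sigma(t,x,u):=\sigma(t,x,\mu^n_t,u)$ are then measurable in $t$ and continuous in $(x,u)$ by \hyperlink{A.1}{(A.1)}. By Remark~\ref{Penalized-coeff} and \hyperlink{A.3}{(A.3)}, they are Lipschitz in $x$ uniformly in $(t,u)$, with growth controlled by constants depending on $C^1_n$ and on $\sup_t\int|z|^2\mu^n_t(\mathrm{d}z)<\infty$. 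So we are exactly in the setting of a non-mean-field relaxed control problem, and the proof has two directions plus the strong-solution claim.

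\emph{Sufficiency.} Given $(\Omega^n,\mathcal{F}^n_t,\mathbb{Q}^n,X^n,M^n,Q^n)$ solving the equation, apply Itô's formula for stochastic integrals against orthogonal martingale measures to $\phi\in\mathcal{C}^2_b(\mathbb{R}^d)$. The quadratic variation of $\int_0^\cdot\int_U\tilde\sigma\,\mathrm{d}M^n$ is $\int_0^\cdot\int_U\tilde\sigma\tilde\sigma^\top Q^n_s(\mathrm{d}u)\mathrm{d}s$ by orthogonality and the intensity assumption. Hence $\phi(X^n_t)-\int_0^t\int_U\mathcal{L}^n\phi(s,X^n_s,\mu^n_s,u)Q^n_s(\mathrm{d}u)\mathrm{d}s$ is a local martingale. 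It is a true martingale because $D\phi$ is bounded and second moments of $X^n$ are finite (via Gronwall and the growth bounds, together with \hyperlink{A.5}{(A.5)}). Since it is adapted to the filtration generated by $(X^n,Q^n)$, the martingale property transfers to the canonical space under $P^n:=\mathbb{Q}^n\circ(X^n,Q^n)^{-1}$ (with $K\equiv0$). Together with $P^n\circ X_0^{-1}=\lambda$, this gives $P^n\in\mathcal{R}_n(\mu^n)$.

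\emph{Necessity.} This is the main step. Given $P^n\in\mathcal{R}_n(\mu^n)$, first use $\phi$ approximating $x_i$ and $x_ix_j$ (with localisation) to show that $N_t:=X_t-X_0-\int_0^t\int_U\tilde b^n Q_s(\mathrm{d}u)\mathrm{d}s$ is a continuous local martingale with $\langle N\rangle_t=\int_0^t\int_U\tilde\sigma\tilde\sigma^\top Q_s(\mathrm{d}u)\mathrm{d}s$. Then invoke the martingale-measure representation theorem of El Karoui--Méléard. On an extension of the canonical space there exist $m$ orthogonal martingale measures $M^n$ with intensity $Q_t(\mathrm{d}u)\mathrm{d}t$ such that $N_t=\int_0^t\int_U\tilde\sigma(s,X_s,u)M^n(\mathrm{d}u,\mathrm{d}s)$. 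This is precisely what \cite[Proposition 3.5]{laker} provides once its hypotheses (measurability, continuity, and the growth and Lipschitz bounds) are checked for $(\tilde b^n,\tilde\sigma)$, which is done above. The obstacle is verifying the integrability needed for this representation: localisation must be removed using the linear growth of $b^n$ and $\sigma\sigma^\top$.

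\emph{Strong solution.} On any space carrying $M^n$ and $Q^n$, Lipschitz continuity of $x\mapsto\tilde b^n,\tilde\sigma$ uniformly in $(t,u)$ lets us run a Picard iteration. The estimates come from the isometry $\mathbb{E}|\int\int\psi\,\mathrm{d}M^n|^2=\mathbb{E}\int\int|\psi|^2Q^n_s(\mathrm{d}u)\mathrm{d}s$ combined with Doob's inequality and Gronwall's lemma. This yields existence and pathwise uniqueness of an adapted solution, i.e.\ a unique strong solution.
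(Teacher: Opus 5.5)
Your proposal is correct and follows the same route as the paper. The paper gives no separate argument: it invokes Proposition~3.5 of \cite{laker} (built on El Karoui--M\'el\'eard) for the penalized coefficients $(b^n,\sigma)$ with the flow $\mu^n$ frozen, which is exactly your reduction, and your It\^o-formula sufficiency argument, martingale-measure representation for necessity, and Picard iteration for strong well-posedness simply unpack that citation and the standard well-posedness remark appended to the statement.
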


\begin{definition}
For each $  n \geq 1$, a relaxed MFG solution is a probability measure $  P^n \in \mathcal{R}_n^*(P^n \circ X^{-1}) $ .

If, in addition, $  P^n $ -almost surely
\begin{equation*}
Q(\mathrm{d}t, \mathrm{d}u) = \delta_{u_t}(\mathrm{d}u)\mathrm{d}t
\end{equation*}
for some progressively measurable process $  u $ , then $  P^n $  is called a strict solution.
\end{definition}

The following theorem is a direct consequence of Theorem~4.1 in \cite{laker}. 
For each $n \geq 1$, the coefficients $(b^n, \sigma, f^n, g)$ satisfy Assumption~\textbf{(A)} of \cite{laker} (with $p = p_{\sigma} = 2$ and $p' = q'$), as ensured by Remark~\ref{Penalized-coeff} together with Assumption~\ref{ass:A}.

\begin{theorem}\label{MFG-existence for penalization}
For each $n \geq 1$, there exists a probability measure $P^n$ satisfying
\begin{equation*}
P^n \in \mathcal{R}_n^*(\mu^n),
\qquad 
\mu^n = P^n \circ X^{-1}.
\end{equation*}
\end{theorem}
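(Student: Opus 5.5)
The plan is to obtain the theorem as an application of Lacker's existence result \cite[Theorem~4.1]{laker}, applied for fixed $n$ to the penalized data $(b^n,\sigma,f^n,g,\lambda,U)$ on the state space $\mathbb{R}^d$. The real work is to check that these data satisfy Lacker's Assumption~\textbf{(A)} with exponents $p=p_\sigma=2$ and $p'=q'$. Once that is done, Lacker's fixed-point construction applies verbatim: the best-response correspondence $\mu\mapsto\mathcal{R}_n^*(\mu)$ is upper hemicontinuous with nonempty compact convex values on a suitable convex compact subset of $\mathcal{P}_2(\mathcal{C}^d)$, and Kakutani--Fan--Glicksberg yields $P^n\in\mathcal{R}_n^*(P^n\circ X^{-1})$.

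I will verify the hypotheses in the following order.

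\textbf{Compactness and initial law.} $U$ is compact by \hyperlink{A.6}{(A.6)}, and $\lambda\in\mathcal{P}_{q'}(\bar D)\subset\mathcal{P}_{q'}(\mathbb{R}^d)$ with $q'>2$ by \hyperlink{A.5}{(A.5)}.

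\textbf{Drift.} The map $x\mapsto x-\pi_{\bar D}(x)$ is $1$-Lipschitz, because $\pi_{\bar D}$ is nonexpansive for convex $\bar D$ (\hyperlink{A.7}{(A.7)}). Since $0\in D$, we have $|x-\pi_{\bar D}(x)|\le |x|$. Hence $b^n$ is Lipschitz in $(x,\mu)$ with constant $C_1+n$, and satisfies linear growth with constant $C_2+n$. The coefficient $\sigma$ is unchanged. This is the content of Remark~\ref{Penalized-coeff}.

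\textbf{Running cost.} By \hyperlink{A.4}{(A.4)} we have $|h|\le C_4(1+|x|+M_2(\mu)^{1/2})$, where $M_2(\mu):=\int_{\mathbb{R}^d}|z|^2\mu(\mathrm{d}z)$. Combined with $|x-\pi_{\bar D}(x)|\le|x|$ and Young's inequality, this gives
\[
n\,|h(t,x,\mu)|\,|x-\pi_{\bar D}(x)|\le C n\bigl(1+|x|^2+M_2(\mu)\bigr).
\]
So $f^n$ has the quadratic growth that Lacker requires for $p=2$. Continuity of $f^n$ in $(x,\mu,u)$ follows from \hyperlink{A.1}{(A.1)}, \hyperlink{A.2}{(A.2)} and continuity of $\pi_{\bar D}$. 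The terminal cost $g$ satisfies Lacker's growth bound directly. One point needs care: the term $n\,h\,(x-\pi_{\bar D}(x))$ is a product of a scalar with a vector, and I will read it as $n\,h\,|x-\pi_{\bar D}(x)|$ or componentwise. Either reading preserves the bound above.

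\textbf{Lower bound on costs.} Lacker's assumption also contains a one-sided (lower) growth condition on $f$ and $g$, needed for coercivity. Here the two-sided bounds in \hyperlink{A.4}{(A.4)} already give $-f^n$ and $-g$ at most quadratic growth. Since $p=2<q'$, the moment estimates from the martingale problem, uniform over controls through the linear growth of $b^n$ and $\sigma\sigma^\top$, keep the cost finite and allow the required uniform integrability.

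\textbf{Admissible laws.} The set $\mathcal{R}_n(\mu^n)$ in Definition~\ref{penalized-relaxed} coincides with Lacker's admissible set for these coefficients. The extra coordinate $K$ in $\Omega[\mathbb{R}^d]$ plays no role and can be taken as $K\equiv 0$ or simply ignored, after identifying laws by their $(X,Q)$-marginals.

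The only genuinely delicate point is matching the growth exponents, in particular ensuring that the penalty term in $f^n$ stays quadratic rather than becoming superquadratic. This is handled by the bound $|x-\pi_{\bar D}(x)|\le|x|$, which is where convexity of $D$ and $0\in D$ enter. Note that the constants $C^1_n,C^2_n$ depend on $n$, which is harmless for fixed $n$.
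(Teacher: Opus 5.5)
Your proposal is correct and follows the paper's route. The paper also derives the result as a direct consequence of \cite[Theorem~4.1]{laker}, after noting (Remark~\ref{Penalized-coeff}) that $(b^n,\sigma,f^n,g)$ satisfy Lacker's Assumption~\textbf{(A)} with $p=p_\sigma=2$ and $p'=q'$. Your checks (nonexpansiveness of $\pi_{\bar D}$, the bound $|x-\pi_{\bar D}(x)|\le|x|$ from $0\in D$, and the harmless extra $K$-coordinate) supply details the paper leaves implicit. Two small refinements: for consistency with $\int_0^T h\,\mathrm{d}K_t$ for $\mathbb{R}^d$-valued $K$, read the penalty as the inner product $n\langle h(t,x,\mu),x-\pi_{\bar D}(x)\rangle$ rather than componentwise, which obeys your quadratic bound by Cauchy--Schwarz. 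Also, any coercivity requirement in the control variable in Lacker's lower bound on $f$ holds automatically because $U$ is compact.
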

\subsection{Relaxed MFGs with Reflected Dynamics}
We introduce the concept of relaxed controls, following the framework used for controlled SDEs without reflection. To formulate the martingale problem for our controlled system, we adopt the method of Ikeda and Watanabe \cite{Ikeda}. The existence of optimal relaxed controls for reflected SDEs has been established in \cite{Kushner-Variance,Laayoun1} using the compactification method. We employ a similar approach to address MFGs with reflected dynamics. The notion of relaxed controls used here is based on \cite{Kushner-Variance}.
\begin{definition} \label{relaxed}
Let $\mu$ be a given probability measure in $\mathcal{P}_2(\mathcal{C}^d(\bar{D}))$.  
We denote by $\mathcal{R}_{\mathrm{ref}(D)}(\mu)$ the set of all probability measures 
$P \in \mathcal{P}(\Omega[\bar{D}])$ satisfying the following conditions:
\begin{description}
    \item[1)] $P \circ X_0^{-1} = \lambda$.
    \item[2)] For each $\phi \in \mathcal{C}^2_b(\mathbb{R}^d)$, the process 
    $\mathcal{M}^{\mu, \phi}$ defined by
    \begin{equation} \label{equ02.5}
        \mathcal{M}^{\mu, \phi}_t 
        := \phi(X_t) 
        - \int_0^t \int_U \mathcal{L}\phi(s, X_s, \mu_s, u)\, Q_s(\mathrm{d}u)\, \mathrm{d}s
        + \int_0^t D \phi(X_s)\, \mathrm{d}K_s
    \end{equation}
    is an $(\mathbb{F}, P)$-martingale, where the operator $\mathcal{L}$ is defined by
    \begin{equation*}
        \mathcal{L}\phi(t, x, \mu, u)
        := b(t, x, \mu, u)^\top D\phi(x)
        + \frac{1}{2}\, \operatorname{Tr}\!\left( 
        \sigma\sigma^\top(t, x, \mu, u)\, D^2\phi(x)
        \right).
    \end{equation*}
    
    \item[3)] $P$-a.s., for every $t \in [0,T]$,
    \begin{equation*}
    |K|_t = \int_0^t \mathbf{1}_{\{ X_s \in \partial D \}}\, \mathrm{d}|K|_s < \infty,
    \qquad 
    K_t = \int_0^t \eta(X_s)\, \mathrm{d}|K|_s.
    \end{equation*}
\end{description}
\end{definition}

\begin{remark}
Under Assumption \hyperlink{A.3}{(A.3)}, the set $\mathcal{R}_{\mathrm{ref}(D)}(\mu)$ is nonempty for every $\mu \in \mathcal{P}_2(\mathcal{C}^d(\bar{D}))$. 
Indeed, fixing a constant control $\alpha_0$, it follows from \cite[Theorem~1.1]{Sznitman} (see also \cite[Theorem~4.1]{Tanaka-existence}) that there exists a probability measure 
$P \in \mathcal{R}_{\mathrm{ref}(D)}(\mu)$ such that
\begin{equation*}
P\left( Q_t = \delta_{\alpha_0} \text{ for almost every } t \in [0,T] \right) = 1.
\end{equation*}
\end{remark}
The cost functional associated with an admissible law $P \in \mathcal{R}_{\mathrm{ref}(D)}(\mu)$ is defined by
\begin{equation} \label{eq:cost-functional}
J(\mu, P)
:= \mathbb{E}^{P}\left(
\int_0^T \int_U f(t, X_t, \mu_t, u) \, Q_t(\mathrm{d}u)\, \mathrm{d}t
+ \int_0^T h(t, X_t, \mu_t) \, \mathrm{d}K_t
+ g(X_T, \mu_T)
\right),
\end{equation}
which is well-defined by Assumption~\hyperlink{A.4}{(A.4)}.

\noindent We then define the (possibly empty) set of optimal admissible laws by
\begin{equation*}
\mathcal{R}_{\mathrm{ref}(D)}^*(\mu)
:= \arg\min_{P \in \mathcal{R}_{\mathrm{ref}(D)}(\mu)} J(\mu, P).
\end{equation*}
The following proposition provides a characterization of the set $\mathcal{R}_{\mathrm{ref}(D)}(\mu)$ in terms of reflected SDEs driven by martingale measures. Since each element of $\mathcal{R}_{\mathrm{ref}(D)}(\mu)$ solves the martingale problem described in point~2) of Definition~\ref{relaxed}, possibly after an enlargement of the underlying probability space, El Karoui and Méléard~\cite{Mel} proved that, in the non-reflected case, the set $\mathcal{R}_{\mathrm{ref}(\mathbb{R}^d)}(\mu)$ coincides with the collection of solutions to an SDE driven by a martingale measure. Analogous representations also hold in the presence of reflection; see, for instance,~\cite{kushnerbook}.
\begin{proposition} \label{rep martingale for Reflected dynamics}
Let $\mu \in \mathcal{P}(\mathcal{C}^{d}(\bar D))$. 
Then $P \in \mathcal{R}_{\mathrm{ref}(D)}(\mu)$ if and only if there exists a filtered probability space 
$(\Omega', \mathcal{F}'_t, \mathbb{Q}')$
supporting a $d$-dimensional $\mathcal{F}'_t$-adapted process $(X', K')$, 
and $m$ orthogonal $\mathcal{F}'_t$-martingale measures 
$M' = (M'_1, \ldots, M'_m)$ on $U \times [0, T]$ with intensity $Q'_t(\mathrm{d}u)\, \mathrm{d}t$, such that
\begin{equation*}
\mathbb{Q}' \circ (X', K', Q')^{-1} = P,
\end{equation*}
and the reflected dynamics satisfy
\begin{equation} \label{rep-eq1}
\mathrm{d}X'_t+ \mathrm{d}K'_t 
= \displaystyle\int_U b(t, X'_t, \hat{\mu}_t, u)\, Q'_t(\mathrm{d}u)\, \mathrm{d}t
+ \displaystyle\int_U \sigma(t, X'_t, \hat{\mu}_t, u)\, M'(\mathrm{d}u, \mathrm{d}t)
. 
\end{equation}
\end{proposition}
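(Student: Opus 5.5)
The argument has two directions. The converse is a direct application of Itô's formula. The direct implication reduces the reflected martingale problem to a non-reflected one with an extra finite-variation drift, and then uses the El Karoui--Méléard representation. Throughout, $\hat\mu_t$ denotes the time-$t$ marginal $\mu\circ X_t^{-1}$.

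\emph{Converse direction.} Suppose $(X',K',M')$ on $(\Omega',\mathcal{F}'_t,\mathbb{Q}')$ solve \eqref{rep-eq1}, with $K'$ satisfying condition 3) of Definition~\ref{relaxed}. For $\phi\in\mathcal{C}^2_b(\mathbb{R}^d)$, apply Itô's formula to $\phi(X'_t)$. Orthogonality of the $M'_j$ and the intensity $Q'_t(\mathrm{d}u)\,\mathrm{d}t$ give the quadratic variation $\int_0^t\int_U \sigma\sigma^\top(s,X'_s,\hat\mu_s,u)\,Q'_s(\mathrm{d}u)\,\mathrm{d}s$. Hence
\begin{equation*}
\phi(X'_t)-\int_0^t\int_U\mathcal{L}\phi(s,X'_s,\hat\mu_s,u)\,Q'_s(\mathrm{d}u)\,\mathrm{d}s+\int_0^t D\phi(X'_s)\,\mathrm{d}K'_s=\phi(X'_0)+\int_0^t\int_U D\phi(X'_s)^\top\sigma\,M'(\mathrm{d}u,\mathrm{d}s).
\end{equation*}
The right-hand side is a true martingale. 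This follows from boundedness of $D\phi$, the growth bound on $\sigma\sigma^\top$ in (A.3), and a second-moment bound on $\sup_t|X'_t|$. That bound comes from Itô on $|X'_t|^2$: convexity of $D$ with $0\in D$ gives $X'_s\cdot\eta(X'_s)\le 0$ on $\partial D$ (since $\eta$ is inward), so the reflection term has the favourable sign; together with Gronwall and (A.5) this bounds the moment. The martingale property transfers to the canonical filtration $\mathbb{F}$ under $P=\mathbb{Q}'\circ(X',K',Q')^{-1}$, because $\mathbb{F}$ is generated by the image processes and the relevant integrands are functionals of those images. Conditions 1) and 3) are preserved under the push-forward since they are pathwise statements.

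\emph{Direct implication.} Let $P\in\mathcal{R}_{\mathrm{ref}(D)}(\mu)$, and set $A_t:=\int_0^t\int_U b\,Q_s(\mathrm{d}u)\,\mathrm{d}s-K_t$, a continuous adapted finite-variation process. Condition 2), written for $\phi(x)=x_i$ and $\phi(x)=x_ix_j$ (localized, since these are not bounded, using the moment bound above), shows the following. First, $N_t:=X_t-X_0-A_t$ is a continuous local martingale. Second, its bracket is $\langle N\rangle_t=\int_0^t\int_U\sigma\sigma^\top(s,X_s,\hat\mu_s,u)\,Q_s(\mathrm{d}u)\,\mathrm{d}s$. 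The only extra care in this step is that $K$ appears as a finite-variation term, so the product rule for $X_iX_j$ involves no cross-variation with $K$.

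From this point the construction is exactly that of El Karoui--Méléard \cite{Mel} (compare Proposition~\ref{rep martingale for nMFG}). Enlarge $\Omega[\bar D]$ by an independent space carrying an auxiliary white-noise martingale measure. Then define $M'$ through a measurable square root of the kernel $\sigma\sigma^\top$ relative to $Q_t(\mathrm{d}u)\,\mathrm{d}t$, so that $N_t=\int_0^t\int_U\sigma\,M'(\mathrm{d}u,\mathrm{d}s)$ and the $M'_j$ are orthogonal with intensity $Q_t\,\mathrm{d}t$. Rearranging $N$ then yields \eqref{rep-eq1} with $X'=X$, $K'=K$, $Q'=Q$ on the enlarged space. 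Condition 3) carries over unchanged.

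The main obstacle is this martingale-measure construction when $\sigma\sigma^\top$ is degenerate, which is what forces the enlargement. My plan is to cite \cite{Mel} for that step rather than redo it, noting that the reflection enters only through the finite-variation drift $-K$ and therefore does not interact with the martingale part.
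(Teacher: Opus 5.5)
Your proposal is correct and takes essentially the same route as the paper. The paper gives no separate proof: it invokes El Karoui--M\'el\'eard for the non-reflected representation and asserts the reflected analogue (citing Kushner), and your argument supplies exactly that content — absorb $-K$ into the finite-variation drift, identify $\langle N\rangle$ from $\phi=x_i,\,x_ix_j$, apply the El Karoui--M\'el\'eard construction, and use It\^o's formula for the converse.

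Two bookkeeping points need fixing. First, with $\mathrm{d}X+\mathrm{d}K$ and $K=\int_0^\cdot\eta(X_s)\,\mathrm{d}|K|_s$, the reflection contributes $-2\langle X,\eta(X)\rangle\,\mathrm{d}|K|$ to $\mathrm{d}|X|^2$. This has the favourable sign only if $\langle X,\eta(X)\rangle\ge 0$, i.e.\ if $\eta$ is the \emph{outward} normal, consistent with $K^n=\int_0^\cdot n(X^n_s-\pi_{\bar D}(X^n_s))\,\mathrm{d}s$; the paper's ``inward'' is a sign slip, and with your $\langle X,\eta\rangle\le 0$ the sign would come out unfavourable. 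Second, in the converse you must assume $\mathbb{Q}'\circ(X'_0)^{-1}=\lambda$ explicitly, since condition 1) is not pathwise and your moment bound via (A.5) depends on it.
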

Under the standard Lipschitz continuity and linear growth conditions on $b$ and $\sigma$ with respect to the state variable $x$, namely Assumption~\hyperlink{A.3}{(A.3)}, the reflected SDE~\eqref{rep-eq1} admits a strong solution; see, for example,~\cite{Ishii}.

The following provides the formal definition of a MFG solution with reflected dynamics.

\begin{definition}
A relaxed MFG solution is a probability measure 
\begin{equation*}
P \in \mathcal{P}_2(\Omega[\bar{D}])
\end{equation*}
such that
\begin{equation*}
P \in \mathcal{R}_{\mathrm{ref}(D)}^{*}\bigl(P \circ X^{-1}\bigr),
\end{equation*}
that is, $P$ is a fixed point of the correspondence
\begin{equation*}
\mu \longmapsto 
\bigl\{\, P \circ X^{-1} : P \in \mathcal{R}_{\mathrm{ref}(D)}^{*}(\mu)\, \bigr\},
\end{equation*}
we also refer to the induced flow of measures $\mu = P \circ X^{-1}$ as a relaxed MFG solution.

A \emph{strict MFG solution} is a relaxed solution $P$ for which there exists a progressively measurable control process $u$ such that
\begin{equation*}
P\bigl(Q(\mathrm{d}t,\mathrm{d}u) = \delta_{u_t}(\mathrm{d}u)\,\mathrm{d}t\bigr) = 1.
\end{equation*}

A relaxed solution $P$ is called \emph{Markovian} if its control measure $Q$ satisfies
\begin{equation*}
P\bigl(Q(\mathrm{d}t,\mathrm{d}u)
= \mathrm{d}t\, q(t,X_t)(\mathrm{d}u)\bigr) = 1
\end{equation*}
for some measurable mapping 
$q : [0,T] \times \mathbb{R}^d \to \mathcal{P}(U)$.

It is called \emph{strict Markovian} if there exists a measurable feedback control 
$\alpha : [0,T] \times \mathbb{R}^d \to U$ such that
\begin{equation*}
P\bigl(Q(\mathrm{d}t,\mathrm{d}u)
= \mathrm{d}t\, \delta_{\alpha(t,X_t)}(\mathrm{d}u)\bigr) = 1.
\end{equation*}
\end{definition}

We begin with the first main result of this paper, stated in the following theorem. 
Its proof is given in Section~\ref{section-proof}.
\begin{theorem}\label{MFGR}
Under Assumption~\ref{ass:A}, there exists a relaxed MFG solution with reflected dynamics.

Moreover, let $P$ be a MFG solution with reflected dynamics, and let $P^n$ be a MFG solution with penalized dynamics given by Theorem~\ref{MFG-existence for penalization}. Denote by $\mu = P \circ X^{-1}$ and $\mu^n = P^n \circ X^{-1}$. Then the following properties hold:
\begin{description}
    \item[1)] $\mu^n \to \mu$ in $\mathcal{P}_{2}(\mathcal{C}^d)$,
    \item[2)] $J^n(\mu^n, P^n) \to J(\mu, P)$.
\end{description}
\end{theorem}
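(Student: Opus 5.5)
The plan is to obtain the reflected equilibrium as a limit of the penalized equilibria $P^n$ from Theorem~\ref{MFG-existence for penalization}. Let $K^n_t := \int_0^t n(X_s - \pi_{\bar D}(X_s))\,\mathrm{d}s$, and regard $\tilde P^n := P^n \circ (X, Q, K^n)^{-1}$ as a law on $\mathcal{C}^d \times \mathcal{U} \times \mathcal{C}^d$.

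\textbf{Step 1: uniform moment bounds.} Using the representation in Proposition~\ref{rep martingale for nMFG}, I will apply It\^o's formula to $|X^n_t|^2$ and to $\mathrm{dist}(X^n_t,\bar D)^2 = |X^n_t - \pi_{\bar D}(X^n_t)|^2$. Convexity of $D$ and $0 \in D$ (Assumption \hyperlink{A.7}{(A.7)}) give $\langle x, x - \pi_{\bar D}(x)\rangle \ge |x - \pi_{\bar D}(x)|^2 \ge 0$, so the penalization term has a good sign. Combined with (A.3), (A.5), Burkholder--Davis--Gundy and Gronwall, this should give
\[
\sup_n \mathbb{E}\Bigl(\sup_{t\le T}|X^n_t|^{q'}\Bigr) < \infty,
\qquad
\sup_n \mathbb{E}\bigl(|K^n|_T^2\bigr) < \infty,
\qquad
\mathbb{E}\Bigl(\sup_{t\le T}\mathrm{dist}(X^n_t,\bar D)^2\Bigr) \to 0 .
\]
The bound on $|K^n|_T$ follows the argument of Lions--Menaldi--Sznitman and Menaldi~\cite{Lion-no-cont,menaldi}: test against $x - a$ for some $a$ in the interior of $D$. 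The bound is uniform in the control and in the flow, because the moment bounds are self-consistent through $\mu^n = P^n\circ X^{-1}$.

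\textbf{Step 2: tightness and identification of the limit.} Tightness of $X^n$ comes from the Aldous/Kolmogorov criterion. The drift part is controlled by the moments, but the penalization drift is not uniformly bounded in modulus. I would therefore use the approach of Bahlali et al.~\cite{bahlali-PDE} (tightness of $X^n + K^n$, then of $K^n$ via the Skorokhod-problem continuity on convex domains), or alternatively Jakubowski's S-topology for $K^n$. The space $\mathcal{U}$ is compact. Passing to a subsequence, $\tilde P^n \to P$, and $q'>2$ upgrades this to $\mathcal{P}_2$ convergence, giving $\mu^n \to \mu$ in $\mathcal{P}_2(\mathcal{C}^d)$.

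Under $P$, I then check the conditions of Definition~\ref{relaxed}:
\begin{itemize}
\item $X \in \bar D$, by Step 1;
\item $K$ has bounded variation, by lower semicontinuity of total variation;
\item the martingale property \eqref{equ02.5} holds, by passing to the limit in $\mathcal{M}^{\mu^n,\phi,n}$, using continuity in (A.1)--(A.3) and uniform integrability;
\item $K_t = \int_0^t \eta(X_s)\,\mathrm{d}|K|_s$ with support on $\partial D$. Here I use the monotonicity inequality $\int_0^t \langle X_s - z, \mathrm{d}K^n_s\rangle \ge 0$ for all $z \in \bar D$, which passes to the limit and characterizes the reflection term on a convex domain.
\end{itemize}
Hence $P \in \mathcal{R}_{\mathrm{ref}(D)}(\mu)$ with $\mu = P\circ X^{-1}$.

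\textbf{Step 3: optimality, which is the main obstacle.} I need $J^n(\mu^n,P^n) \to J(\mu,P)$ and $J(\mu,P) \le J(\mu,P')$ for every $P' \in \mathcal{R}_{\mathrm{ref}(D)}(\mu)$. Two estimates are required.

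\emph{(a) Lower bound.} Show
\[
\liminf_n J^n(\mu^n,P^n) \ge J(\mu,P).
\]
The running-cost term $n h\,(X-\pi_{\bar D}(X))\,\mathrm{d}t$ is exactly $h\,\mathrm{d}K^n$. Because $h$ is continuous and $X^n$ converges uniformly, the integrals $\int h\,\mathrm{d}K^n$ converge, again using uniform integrability of $|K^n|_T$. The other terms are handled by continuity and the growth bounds (A.4). This actually gives equality along the subsequence.

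\emph{(b) Recovery sequence.} For a competitor $P'$, use Proposition~\ref{rep martingale for Reflected dynamics} to realize it on a space carrying $(X',K',M')$. On that same space, solve the penalized SDE with the same martingale measure, the same control, and the flow $\mu^n$. This defines $P'^n \in \mathcal{R}_n(\mu^n)$. Then show $J^n(\mu^n,P'^n) \to J(\mu,P')$ by Menaldi-type strong convergence estimates:
\[
\mathbb{E}\sup_t |X'^n_t - X'_t|^2 \to 0,
\]
uniform in the control, plus a $W_2(\mu^n,\mu)$ term controlled by Lipschitz continuity. The delicate part is convergence of $\int h\,\mathrm{d}K'^n$ to $\int h\,\mathrm{d}K'$. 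For this I would use the Lions--Sznitman rate $\mathbb{E}\sup_t|X'^n_t-X'_t|^2 = O(n^{-1/2})$ together with weak-star convergence of $\mathrm{d}K'^n$.

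Combining (a) and (b) with $J^n(\mu^n,P^n) \le J^n(\mu^n,P'^n)$ gives optimality, so $P$ is a relaxed MFG solution.

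For claims 1) and 2), the same argument shows that every limit point of $(P^n)$ is an MFG solution and that the costs converge. Convergence of the full sequence is understood along the subsequence identifying the given $P$ as a limit point. Without a uniqueness assumption I expect the statement to be interpreted this way.
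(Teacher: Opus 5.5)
Your proposal is correct and follows the paper's proof: penalized equilibria from Lacker's theorem, uniform bounds from $\langle x, x-\pi_{\bar D}(x)\rangle \ge 0$ plus an interior ball to control $|K^n|_{[0,T]}$, tightness, passage to the limit with convergence of the costs, and optimality through exactly the recovery sequence of Lemma~\ref{adjointlemma}, with 1)--2) read along the extracted subsequence as in the paper's own argument. The differences are only technical: you identify the limit directly via the martingale problem and the inequality $\int_0^t \langle X_s - z, \mathrm{d}K_s\rangle \ge 0$ for $z \in \bar D$, where the paper uses Theorem~4.3 of \cite{Lauka} with pathwise uniqueness and the lemma of \cite{bahlali-PDE}, and in the recovery step you use strong Menaldi-type estimates \cite{menaldi} where the paper uses Skorokhod representation; note also that with only $q'>2$ your interior-ball argument yields $\sup_n \mathbb{E}|K^n|_{[0,T]}^{q'/2}$ rather than a second moment, which still suffices for the uniform integrability of $|K^n|_{[0,T]}$ you invoke.
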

\section{Proof of Theorem \ref{MFGR}}\label{section-proof}
By Theorem~\ref{MFG-existence for penalization}, for each $n \in \mathbb{N}^*$ there exists 
$P^n \in \mathcal{R}_n^*(\mu^n)$ such that $\mu^n = P^n \circ X^{-1}$. 
According to Proposition~\ref{rep martingale for nMFG}, one can construct a filtered probability space 
$(\Omega^n, (\mathcal{F}^n_t)_{t\ge0}, \mathbb{Q}^n)$ supporting a $d$-dimensional process $X^n$ and 
$m$ orthogonal martingale measures $M^n = (M^n_1, \dots, M^n_m)$ with intensity 
$Q^n_t(\mathrm{d}u)\mathrm{d}t$, such that
\begin{equation*}
\mathbb{Q}^n \circ (X^n, Q^n)^{-1} = P^n, 
\qquad 
\mu^n = \mathbb{Q}^n \circ (X^n)^{-1},
\end{equation*}
and the process $X^n$ satisfies
\begin{equation*}
\begin{cases}
\mathrm{d}X^n_t
= \displaystyle\int_U b(t,X^n_t,\mu^n_t,u)\,Q^n_t(\mathrm{d}u)\,\mathrm{d}t
- n\big(X^n_t - \pi_{\bar D}(X^n_t)\big)\,\mathrm{d}t
+ \displaystyle\int_U \sigma(t,X^n_t,\mu^n_t,u)\,M^n(\mathrm{d}u,\mathrm{d}t),\\
\mathbb{Q}^n \circ (X^n_0)^{-1} = \lambda .
\end{cases}
\end{equation*}
Define
\begin{equation*}
K^n_t := \int_0^t n\big(X^n_s - \pi_{\bar D}(X^n_s)\big)\,\mathrm{d}s .
\end{equation*}
Then the above dynamics can be written in the compact form
\begin{equation}\label{equation1}
\mathrm{d}X^n_t + \mathrm{d}K^n_t
= \int_U b(t,X^n_t,\mu^n_t,u)\,Q^n_t(\mathrm{d}u)\,\mathrm{d}t
+ \int_U \sigma(t,X^n_t,\mu^n_t,u)\,M^n(\mathrm{d}u,\mathrm{d}t).
\end{equation}
\paragraph*{Strategy of the Proof.}
The proof proceeds by first establishing the relative compactness of the sequence of probability measures $(P^n)_{n \ge 1}$ and then showing that any limit point of this sequence corresponds to a MFG solution with reflected dynamics. To this end, the argument is divided into two main steps:
\begin{itemize}
    \item In Subsection~\ref{subsection 4.1}, we prove the tightness of the sequence of processes $(X^n, K^n, Q^n, M^n)_{n \geq 1}$.
    \item In Subsection~\ref{subsection 4.2}, we identify the limit and prove Theorem~\ref{MFGR}.
\end{itemize}

To prove the admissibility of the limit of $P^n$, we follow the approach of \cite{Lauka,lauka1}. 
We begin by establishing the tightness of the sequence $(X^n, K^n, Q^n, M^n)$. 
In particular, the pair $(X^n, K^n)$ is studied with respect to the S-topology on the space 
$\mathcal{D}(\mathbb{R}^+, \mathbb{R}^d)^2$ of càdlàg $\mathbb{R}^d$-valued functions, 
introduced by Jakubowski in \cite{jakub}. 
Since the coefficients $b$ and $\sigma$ are Lipschitz continuous, the reflected SDE admits a unique solution. 
Then, by applying Theorem~4.3 in \cite{Lauka}, we deduce that the limit of $P^n$ is admissible.

\subsection{Relative Compactness of the Penalized Processes $(X^n, K^n, Q^n, M^n)$}\label{subsection 4.1}
The following lemmas establish the relative compactness of the processes $(X^n, K^n, Q^n, M^n)$.

\begin{lemma} \label{lemma1}
Assume \hyperlink{A.3}{(A.3)}. For every $ q \ge 1 $ with $2q \in [2, q'] $, there exists a constant $ C > 0 $, depending only on $ q $, $ |\lambda|^{q'} $, $ T $, and the constant $ C_2 $ in \hyperlink{A.3}{(A.3)}, such that
\begin{equation} \label{eq3.1}
\sup_n \mathbb{E}^{\mathbb{Q}^n} \left( \sup_{0 \leq t \leq T} |X^n_t|^{2q} + \sup_{0 \leq t \leq T} |K^n_t|^{2q} + |K^n|^q_{[0,T]} \right) \leq C.
\end{equation}
\end{lemma}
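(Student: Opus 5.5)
We follow the classical penalization estimates of Lions--Menaldi--Sznitman \cite{Lion-no-cont} and Menaldi \cite{menaldi}. The key ingredient is the geometry of $D$ from \hyperlink{A.7}{(A.7)}. Since $D$ is convex, open and contains $0$, there is $r>0$ with $B(0,r)\subset D$. Then for every $x\in\mathbb{R}^d$,
\begin{equation*}
\langle x,\, x-\pi_{\bar D}(x)\rangle \;\ge\; r\,|x-\pi_{\bar D}(x)|,
\end{equation*}
because $\langle x-a,\,x-\pi_{\bar D}(x)\rangle\ge 0$ for all $a\in\bar D$ (characterization of the projection); choosing $a=r\,(x-\pi_{\bar D}(x))/|x-\pi_{\bar D}(x)|$ gives the claim. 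We also have $\langle x,\,x-\pi_{\bar D}(x)\rangle\ge 0$, so the penalization term pushes toward the origin. Writing $\mathrm{d}|K^n|_t=n|X^n_t-\pi_{\bar D}(X^n_t)|\,\mathrm{d}t$, this yields
\begin{equation*}
\langle X^n_t,\mathrm{d}K^n_t\rangle\ge r\,\mathrm{d}|K^n|_t .
\end{equation*}

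First we bound $X^n$ uniformly in $n$. Apply It\^o's formula to $|X^n_t|^2$ using \eqref{equation1}, with $\mathrm{d}\langle X^n\rangle_t=\int_U\sigma\sigma^\top(t,X^n_t,\mu^n_t,u)\,Q^n_t(\mathrm{d}u)\,\mathrm{d}t$ since the martingale measures are orthogonal with intensity $Q^n_t\,\mathrm{d}t$. This gives
\begin{equation*}
|X^n_t|^2+2r|K^n|_t\le |X^n_0|^2+\int_0^t\!\!\int_U\big(2\langle X^n_s,b\rangle+|\sigma\sigma^\top|\big)Q^n_s(\mathrm{d}u)\mathrm{d}s+2\int_0^t\!\!\int_U\langle X^n_s,\sigma\, M^n(\mathrm{d}u,\mathrm{d}s)\rangle .
\end{equation*}
Next raise this to the power $q$, take $\sup_{s\le t}$ and expectation, and use BDG on the stochastic integral. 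The growth bounds in \hyperlink{A.3}{(A.3)} control $b$ and $\sigma\sigma^\top$ by $1+|X^n_s|^2+\int|z|^2\mu^n_s(\mathrm{d}z)$. The key point is that $\mu^n_s=\mathrm{Law}(X^n_s)$ by the fixed-point property, so $\int|z|^2\mu^n_s(\mathrm{d}z)\le \mathbb{E}\sup_{v\le s}|X^n_v|^2\le(\mathbb{E}\sup_{v\le s}|X^n_v|^{2q})^{1/q}$. The BDG term is bounded by $\mathbb{E}\big(\int_0^t|X^n_s|^2(1+|X^n_s|^2+m_s)\,\mathrm{d}s\big)^{q/2}$, which is handled via Young's inequality as $\varepsilon\,\mathbb{E}\sup|X^n|^{2q}+C_\varepsilon\int_0^t(\cdots)$. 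Gronwall (after a localization to make the quantities finite) then gives $\sup_n\mathbb{E}\sup_t|X^n_t|^{2q}\le C$, with $C$ depending only on $q,\ T,\ C_2$ and $\int|x|^{q'}\lambda(\mathrm{d}x)$, since $2q\le q'$. This bound is independent of $n$ precisely because the penalty term enters with a good sign and has been dropped.

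The same inequality also controls the total variation. Retaining $2r|K^n|_T$ on the left and raising to the power $q$ gives
\begin{equation*}
\mathbb{E}|K^n|_T^q\le C\Big(1+\mathbb{E}\sup_t|X^n_t|^{2q}\Big)\le C,
\end{equation*}
which is bounded thanks to the previous step. Finally, write $K^n_t=X^n_0-X^n_t+\int_0^t\!\int_U b\,Q^n\,\mathrm{d}s+\int_0^t\!\int_U\sigma\,M^n$ and apply BDG together with the bound on $X^n$. This gives $\mathbb{E}\sup_t|K^n_t|^{2q}\le C$. Note that we do not use $|K^n_t|\le|K^n|_t$ here, since that would only give the power $q$.

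The main obstacle is the geometric inequality $\langle x,x-\pi_{\bar D}(x)\rangle\ge r|x-\pi_{\bar D}(x)|$ together with its use in It\^o's formula, because this is where the uniformity in $n$ comes from. A secondary difficulty is the mean-field term: closing the Gronwall argument requires the consistency $\mu^n_t=\mathrm{Law}(X^n_t)$ rather than an a priori bound on $\mu^n$.
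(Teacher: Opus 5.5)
Your proof is correct and follows essentially the same route as the paper: It\^o's formula on $|X^n|^2$, BDG, Young and Gronwall after dropping the nonnegative penalty term, then the total variation bound via the projection inequality tested against a point of a ball around $0$ inside $D$, and finally the bound on $\sup_t|K^n_t|^{2q}$ read off from the equation. The differences are minor: you apply the projection inequality pointwise in time to get $\langle X^n_t,\mathrm{d}K^n_t\rangle\ge r\,\mathrm{d}|K^n|_t$ directly, whereas the paper fixes the direction of each increment $K^n_t-K^n_s$ and sums over partitions; and you make explicit the use of $\mu^n_s=\mathrm{Law}(X^n_s)$ to close the Gronwall step, which the paper leaves implicit.
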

\begin{proof}
Applying Itô’s formula to $ |X^n_t|^2 $, we obtain the following result
\begin{align*}
|X^n_t|^2+2\displaystyle\int_{0}^{t}\big\langle X^n_s,\mathrm{d}K^n_s \big\rangle &= |X_{0}|^2 +2\displaystyle\int_0^t\int_{U}^{}\big\langle X^n_s, b^n(s,X^n_s,\mu^n_s,u)Q^n_s(\mathrm{d}u)\mathrm{d}s\big\rangle\\
&+2 \int_0^t\int_{U}^{} \big\langle X^n_s,\sigma(s,X^n_s, \mu^n_s, u)M^n(\mathrm{d}u,\mathrm{d}s)\big\rangle 
+\int_{0}^{t} \int_{U}^{}\big|\sigma(s,X^n_s, \mu^n_s, u)\big|^2Q^n_s(\mathrm{d}u)\mathrm{s}
\end{align*}
Therefore, there exists a constant $ C$, which may vary from line to line, such that
\begin{align*}
\left( |X^n_t|^2 + 2\int_0^t \langle X^n_s, \mathrm{d}K^n_s \rangle \right)^q 
&\leq C \Bigg( |X_0|^{2q}+
\left| \int_0^t \int_U \langle X^n_s, b^n(s, X^n_s, \mu^n_s, u) \rangle Q^n_s(\mathrm{d}u) \, \mathrm{d}s \right|^q \\
&\quad + \left| \int_0^t \int_U \big\langle X^n_s, \sigma(s, X^n_s, \mu^n_s, u) \big\rangle M^n(\mathrm{d}u, \mathrm{d}s) \right|^q \\
&\quad + \int_0^t \int_U \big|\sigma(s, X^n_s, \mu^n_s, u)\big|^{2q} Q^n_s(\mathrm{d}u) \, \mathrm{d}s 
\Bigg)
\end{align*}
By Burkholder-Davis-Gundy inequality and assumption \hyperlink{A.3}{(A.3)} we deduce 
\begin{equation}
\begin{split}
\mathbb{E}^{\mathbb{Q}^n}\Big( \sup_{ 0 \leq t \leq T} \Big(|X^n_t|^2+2\int_{0}^{t}\langle & X^n_s, \mathrm{d}K^n_s \rangle \Big)^q \Big) \leq C\Bigg(1+\mathbb{E}^{\mathbb{Q}^n}|X_0|^{2q}+ \mathbb{E}^{\mathbb{Q}^n}\left( \int_{0}^{T} |X^n_s|^{2q} \mathrm{d}s \right)\\
&+ \mathbb{E}^{\mathbb{Q}^n}\left( \int_0^T\int_{U}^{} \big|X^n_s\big|^2\big|\sigma(s,X^n_s, \mu^n_s, u)\big|^2Q^n_s(\mathrm{d}u)\mathrm{d}s\right)^{q/2}\Bigg)\\
&\leq C\Bigg(1+\mathbb{E}^{\mathbb{Q}^n}|X_0|^{2q}+\mathbb{E}^{\mathbb{Q}^n}\left( \int_{0}^{T} |X^n_s|^{2q} \mathrm{d}s \right)\\ &+\mathbb{E}^{\mathbb{Q}^n}\left(\sup_{0 \leq t \leq T} |X^n_t|^q\left(\int_0^T \int_{U}^{}\big|\sigma(s,X^n_s, \mu^n_s, u)\big|^2Q^n_s(\mathrm{d}u)\mathrm{d}s\right)^{q/2}\right) \Bigg)\\
&\leq C\Bigg(1+\mathbb{E}^{\mathbb{Q}^n}|X_0|^{2q}+\mathbb{E}^{\mathbb{Q}^n}\left( \int_{0}^{T} |X^n_s|^{2q} \mathrm{d}s \right) \Bigg)  +\dfrac{1}{2} \mathbb{E}^{\mathbb{Q}^n}\left(\sup_{0 \leq t \leq T} |X^n_t|^{2q}\right)
\label{eqlemma1.1}
\end{split}
\end{equation}
In the last inequality, we used the standard inequality $ 2ab \leq \dfrac{a^2}{\epsilon} + \epsilon b^2 $, which holds for all $ \epsilon > 0 $ and $ a, b \in \mathbb{R} $. We also used the growth assumptions on $ \sigma $ stated in Assumption \hyperlink{A.3}{(A.3)}.

Since $ D $ is convex, the squared distance function $ x \mapsto \operatorname{dist}^2(x, \bar{D}) $ is differentiable, and its gradient is given by
\begin{equation*}
\nabla \operatorname{dist}^2(x, \bar{D}) = 2(x - \pi_{\bar{D}}(x)),
\end{equation*}
where $ \pi_{\bar{D}}(x) $ is the projection of $ x $ onto the closed convex set $ \bar{D} $.\\ Moreover, since $ \operatorname{dist}^2(x, \bar{D}) $ is convex, the following inequality holds
\begin{equation}\label{convex}
\langle x - y, 2(x - \pi_{\bar{D}}(x)) \rangle \geq \operatorname{dist}^2(x, \bar{D}) \geq 0, \quad \forall x \in \mathbb{R}^d,\, y \in \bar{D}.
\end{equation}
From inequality~\eqref{convex} and the assumption that $ 0 \in D $, we conclude that
\begin{equation*}
\displaystyle\int_{0}^{t}\langle X^n_s, \mathrm{d}K^n_s \rangle = \displaystyle\int_{0}^{t}\langle X^n_s, n(X^n_s-\pi_{\bar{D}}(X^n_s) ) \rangle \mathrm{d}s \geq 0.
\end{equation*}
It follows from \eqref{eqlemma1.1} that
\begin{align*}
\mathbb{E}^{\mathbb{Q}^n}\left( \sup_{ 0 \leq t \leq T} |X^n_t|^{2q} \right) &\leq C \left(1+\mathbb{E}^{\mathbb{Q}^n}|X_0|^{2q}+\mathbb{E}^{\mathbb{Q}^n}\left( \int_{0}^{T} |X^n_s|^{2q} \mathrm{d}s \right) \right) \\
&\leq C \left(1+(|\lambda|^{q'})^{2q/q'}+\mathbb{E}^{\mathbb{Q}^n}\left( \int_{0}^{T} |X^n_s|^{2q} \mathrm{d}s \right) \right),
\end{align*}
and from the Gronwall lemma,
\begin{equation}\label{eqlemma1.2}
\mathbb{E}^{\mathbb{Q}^n}\left( \sup_{ 0 \leq t \leq T} |X^n_t|^{2q} \right) \leq C.
\end{equation}
Combining equations \eqref{eqlemma1.1} and \eqref{eqlemma1.2}, we obtain 
\begin{equation}\label{eqlemma1.3}
\mathbb{E}^{\mathbb{Q}^n}\left( \displaystyle\int_{0}^{T}\langle X^n_s, \mathrm{d}K^n_s \rangle\right)^q \leq C.
\end{equation}
Because $ D $ is open and contains $ 0 $, there exists $ \epsilon > 0 $ such that
\begin{equation*}
\epsilon \frac{K^n_t - K^n_s}{\lvert K^n_t - K^n_s \rvert} \in D.
\end{equation*}
Applying inequality \eqref{convex} with $ y = \epsilon \frac{K^n_t - K^n_s}{\lvert K^n_t - K^n_s \rvert} $, we obtain
\begin{equation}
\begin{split}
&\int_{s}^{t}\langle X^n_r-y, \mathrm{d}K^n_r \rangle \geq 0\\
&\int_{s}^{t}\langle X^n_r, \mathrm{d}K^n_r \rangle \geq \int_{s}^{t}\langle y, \mathrm{d}K^n_r \rangle = \epsilon |K^n_t-K^n_s|.
\end{split} \label{eqlemma1.4}
\end{equation}
Recall that,
\begin{equation}\label{eqlemma1.5}
|K^n|_{[0, T]} =\sup_{\pi}|\sum_{i=0}^{m} K^n_{t_{i+1}}-K^n_{t_{i}}|\leq \sup_{\pi}\sum_{i=0}^{m} |K^n_{t_{i+1}}-K^n_{t_{i}}| 
\end{equation}
where the supremum is taken over all subdivisions $\pi$ of $[0, T]$.\\
Therefore, combining \eqref{eqlemma1.3}, \eqref{eqlemma1.4} and \eqref{eqlemma1.5}, we drive 
\begin{equation*}
\mathbb{E}^{\mathbb{Q}^n}\left( |K^n|^q_{[0, T]} \right) \leq C.
\end{equation*}
Since, 
\begin{equation*}
K^n_t = \displaystyle \int_{0}^{t} \displaystyle\int_{U}^{} b(s,X^n_s,\mu^n_s, u)Q^n_s(\mathrm{d}u)\mathrm{d}s+ \int_{0}^{t}\int_{U}^{}\sigma(s,X^n_s,\mu^n_s, u) M^n(\mathrm{d}u,\mathrm{d}s)+X^n_0-X^n_t
\end{equation*}
It is easy to verify that 
\begin{equation*}
\mathbb{E}^{\mathbb{Q}^n}\left(\sup_{0 \leq t \leq T} |K^n_t|^{2q} \right) \leq C.
\end{equation*}
\end{proof}
\begin{lemma}\label{lemma2}
$(X^n, K^n)_{n \geq 1}$ is tight with respect to the $S$-topology and $(M^n(\mathrm{d}u, \mathrm{d}t))_{n \geq 1}$ is tight on space $C_{S'}=\mathcal{C}([0, T], S')$ of continuous functions from $[0, T]$ with values in $S'$ the topological dual of the Schwartz space $S$ of rapidly decreasing functions.
\end{lemma}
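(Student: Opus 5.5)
To prove tightness of $(X^n,K^n)$ in the S-topology, I will use Jakubowski's criterion \cite{jakub}. A family of $\mathbb{R}^d$-valued càdlàg processes $(Y^n)$ is S-tight as soon as
\[
\sup_n \mathbb{E}\bigl(\sup_{0\le t\le T}|Y^n_t|\bigr)<\infty
\]
and the expected number of upcrossings $N^{a,b}(Y^n)$ of every interval $[a,b]$ is uniformly bounded. A convenient sufficient condition is the uniform bound on the conditional variation,
\[
\sup_n \mathrm{CV}_T(Y^n) := \sup_n \sup_{\pi} \mathbb{E}\sum_i \bigl|\mathbb{E}(Y^n_{t_{i+1}}-Y^n_{t_i}\mid \mathcal{F}^n_{t_i})\bigr| + \mathbb{E}|Y^n_T| < \infty .
\]
This is the Meyer--Zheng type condition: it gives S-tightness for quasimartingales and, in particular, for any process written as a martingale plus a finite-variation part with integrable total variation.

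For $K^n$ the verification is immediate. It is continuous with finite variation, and the upcrossing count of each component is bounded by $|K^n|_{[0,T]}/(b-a)$. Lemma~\ref{lemma1} with $q=1$ then gives $\sup_n\mathbb{E}^{\mathbb{Q}^n}|K^n|_{[0,T]}\le C$ together with the bound on $\sup_t|K^n_t|$.

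For $X^n$, I will use the decomposition \eqref{equation1}. It writes $X^n$ as $X^n_0$, plus the martingale
\[
N^n_t=\int_0^t\int_U\sigma(s,X^n_s,\mu^n_s,u)\,M^n(\mathrm{d}u,\mathrm{d}s),
\]
plus the finite-variation process
\[
\int_0^t\int_U b(s,X^n_s,\mu^n_s,u)\,Q^n_s(\mathrm{d}u)\,\mathrm{d}s - K^n_t .
\]
The conditional variation of $X^n$ is then bounded by the expected total variation of the finite-variation part. By the growth condition (A.3) and Lemma~\ref{lemma1}, this is at most $C\,\mathbb{E}\int_0^T(1+|X^n_s|+\|\mu^n_s\|_2)\,\mathrm{d}s+\mathbb{E}|K^n|_{[0,T]}\le C$. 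Here $\|\mu^n_s\|_2^2=\mathbb{E}^{\mathbb{Q}^n}|X^n_s|^2$ is controlled by the same lemma. Combined with $\sup_n\mathbb{E}\sup_t|X^n_t|\le C$, this yields S-tightness of each component. Tightness of the pair then follows because a product of S-relatively compact sets is relatively compact, so marginal tightness suffices.

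For the martingale measures, I will follow Mitoma's theorem. A sequence of $S'$-valued continuous processes is tight in $\mathcal{C}([0,T],S')$ if, for every $\varphi\in S$, the real processes $M^n_t(\varphi)=\int_0^t\int_U\varphi(u)\,M^n(\mathrm{d}u,\mathrm{d}s)$ are tight in $\mathcal{C}([0,T],\mathbb{R})$. Since the intensity is $Q^n_s(\mathrm{d}u)\,\mathrm{d}s$ with $Q^n_s$ a probability measure, the quadratic variation satisfies $\langle M^n(\varphi)\rangle_t-\langle M^n(\varphi)\rangle_s\le\|\varphi\|_\infty^2(t-s)$. The BDG inequality then gives $\mathbb{E}|M^n_t(\varphi)-M^n_s(\varphi)|^4\le C\|\varphi\|_\infty^4|t-s|^2$ and $M^n_0(\varphi)=0$, so Kolmogorov's criterion yields tightness. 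I expect the main obstacle to be the careful justification of Jakubowski's criterion for $X^n$. The penalization term $K^n$ blows up pointwise in $n$ and destroys uniform modulus-of-continuity estimates, so uniform tightness is unavailable. Hence the argument must rely precisely on the $L^1$ bound of $|K^n|_{[0,T]}$ from Lemma~\ref{lemma1}.
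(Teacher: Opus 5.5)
Your proposal is correct and follows essentially the same route as the paper: $S$-tightness of $(X^n,K^n)$ via the conditional-variation (Meyer--Zheng/Jakubowski) criterion, where the martingale part contributes nothing and the drift and $K^n$ are controlled by (A.3) and Lemma~\ref{lemma1}, and tightness of $(M^n)$ via Mitoma's theorem, reduced to BDG moment bounds on $M^n(\varphi)$. The one step the paper makes explicit that you take for granted is that each $M^n$ is genuinely a $\mathcal{C}([0,T],S')$-valued random variable before Mitoma's criterion applies; the paper obtains this from Cho's Lemma~2.1, whose hypothesis holds because $U$ is compact.
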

\begin{proof}
The martingale measure $(M^n(\mathrm{d}u, \mathrm{d}t))_{n \geq 0}$ can be interpreted as a process taking values in the space of distributions. Since $U$ is compact and the predictable quadratic variation satisfies $\langle M^n(\mathrm{d}u, \mathrm{d}t) \rangle = Q^n_t(\mathrm{d}u)\, \mathrm{d}t$, 
we have that, for every $p_0 \geq 1$,
\begin{equation}\label{Condition 2.1}
\sup_{n} \mathbb{E}^{\mathbb{Q}^n} \left(
\int_{0}^{T} \int_{U} \frac{1}{1+|u|^{p_0}}\, Q^n_t(\mathrm{d}u)\, \mathrm{d}t
\right)
\leq C_T \sup_{u \in U} \frac{1}{1+|u|^{p_0}} < \infty .
\end{equation}
this ensures that Condition 2.1 in \cite{Cho} is satisfied. Therefore, by \cite[Lemma 2.1]{Cho}, the family $(M^n(\mathrm{d}u, \mathrm{d}t))_{n \geq 0}$ defines a sequence of random variables taking values in $C_S'$.\\
 Therefore, by \cite[Theorem 3.1]{mitoma}, the sequence $(M^n(\mathrm{d}u, \mathrm{d}t))_{n \geq 0}$ is tight in $C_S' $ provided that, for every test function $\phi \in S$, the sequence $(M^n(\phi))_{n \geq 0}$ is tight in $C([0, T], \mathbb{R}^d)$, where $M^n(\phi)$ is defined as
\begin{equation*}
M^n_t(\phi) := \int_0^t \int_U \phi(u) M^n(\mathrm{d}u, \mathrm{d}s).
\end{equation*}
To prove the tightness of $(M^n(\phi))_{n \geq 0}$, we apply Aldous’s criterion from \cite[Theorem 16.10]{Bellin}. \\
Fix $s < t$ and $q > 1$. Using the Burkholder-Davis-Gundy inequality, we obtain
\begin{align*}
\mathbb{E}^{\mathbb{Q}^n}\left(|M^n_t(\phi) - M^n_s(\phi)|^{2q}\right) & \leq C_q \mathbb{E}^{\mathbb{Q}^n} \left( \left( \int_s^t \int_U |\phi(u)|^2 Q^n_s (\mathrm{d}u) \mathrm{d}s \right)^q \right)\\
&\leq C_q \mathbb{E}^{\mathbb{Q}^n} \left( \left( \int_s^t \sup_{u \in U} |\phi(u)|^2 \mathrm{d}t \right)^q \right)\\
&\leq C_q \sup_{u \in U} |\phi(u)|^{2q} |t - s|^q \leq K_q |t - s|^q.
\end{align*}
Thus, this implies that the sequence $(M^n(\phi))_{n \geq 0}$ is tight in $C([0, T], \mathbb{R}^d)$.

To prove that $(X^n, K^n)$ is $S$-tight, we apply Remark (A.1) in \cite{Lejay}. 
By Lemma \ref{lemma1}, the characterization given in this remark directly applies to $K^n$; hence $(K^n)_{n\ge1}$ is $S$-tight.\\
We now estimate the conditional variation of $(X^n)_{n\ge1}$. For any $n \ge 1$, since the conditional variation of a martingale is zero, we have
\begin{equation*}
CV_T(X^n) \le CV_T\left(\int_0^{\cdot}\int_U b(s, X^n_s, \mu^n_s, u)\,Q^n_s(\mathrm{d}u)\,\mathrm{d}s\right) + CV_T(K^n).
\end{equation*}
Under the linear growth condition \hyperlink{A.3}{(A.3)} on $b$, there exists a constant $C>0$ such that
\begin{equation*}
CV_T\left(\int_0^{\cdot}\int_U b(s, X^n_s, \mu^n_s, u)\,Q^n_s(\mathrm{d}u)\,\mathrm{d}s\right)
\le C\left(1+\Big(\mathbb{E}^{\mathbb{Q}^n}\sup_{0\le t\le T}|X^n_t|^2\Big)^{1/2}\right).
\end{equation*}
Hence,
\begin{equation*}
CV_T(X^n)
\le C\left(
1+\sup_n \Big(\mathbb{E}^{\mathbb{Q}^n}\sup_{0\le t\le T}|X^n_t|^2\Big)^{1/2}
+\sup_n \mathbb{E}^{\mathbb{Q}^n}|K^n|_{[0,T]}
\right).
\end{equation*}
By Lemma \ref{lemma1}, the right-hand side is finite, so the condition of Remark (A.1) in \cite{Lejay} is satisfied. 
Therefore, $(X^n)_{n\ge1}$ is $S$-tight, and consequently, $(X^n, K^n)$ is $S$-tight.
\end{proof}

\begin{lemma}\label{lemma3}
The sequence of processes $(Q^n)_{n \geq 0}$ is tight in $\mathcal{U}$.
\end{lemma}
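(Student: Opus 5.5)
The plan is to show that the ambient space $\mathcal{U}$ is compact, since then tightness is automatic. Concretely, I will show that the laws $\mathbb{Q}^n \circ (Q^n)^{-1} = P^n \circ Q^{-1}$ form a tight family in $\mathcal{P}(\mathcal{U})$.

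The first step is compactness of $\mathcal{U}$. The set of probability measures on $[0,T] \times U$ is weakly compact by Prokhorov's theorem, because $[0,T] \times U$ is compact by Assumption \hyperlink{A.6}{(A.6)}. The condition that the first marginal equals Lebesgue measure (normalised by $T$) is preserved under weak limits. Indeed, for any $\psi \in C([0,T])$ the map $q \mapsto \int \psi(t)\, q(\mathrm{d}t,\mathrm{d}u)$ is weakly continuous, so $\mathcal{U}$ is a closed subset of a compact set and hence compact for the weak topology. On a compact underlying space the $2$-Wasserstein distance metrizes weak convergence, since all second moments are uniformly bounded by $\operatorname{diam}([0,T]\times U)^2$. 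Therefore $(\mathcal{U}, d_{\mathcal{U}})$ is compact, as already noted in Section~\ref{sec-main-result}.

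The second step is tightness. Take $K = \mathcal{U}$ itself as the compact set. Then $\mathbb{Q}^n(Q^n \in \mathcal{U}) = 1$ for every $n$, which is exactly tightness of $(Q^n)_{n\ge 0}$. By Prokhorov's theorem on the Polish space $\mathcal{U}$, the sequence is also relatively compact. This is what is needed later to combine with Lemma~\ref{lemma2} and extract a jointly converging subsequence of $(X^n,K^n,Q^n,M^n)$.

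There is essentially no obstacle in this argument. The only point needing care is to check that the closedness of the Lebesgue-marginal constraint and the equivalence of the $W_2$ topology with the weak topology both hold on the compact set $[0,T]\times U$. If one preferred to avoid compactness of $U$, one could instead use the moment bound \eqref{Condition 2.1}-type estimate $\sup_n \mathbb{E}\int_0^T\int_U |u|^{p}Q^n_t(\mathrm{d}u)\mathrm{d}t<\infty$ together with the compactness criterion for $\mathcal{U}$ from \cite{laker}. Under \hyperlink{A.6}{(A.6)}, however, the direct argument suffices.
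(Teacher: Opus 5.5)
Your proposal is correct and follows the paper's argument: $U$ is compact by (A.6), so $\mathcal{U}$ is compact, and taking $\mathcal{U}$ itself as the compact set makes tightness of the laws of $Q^n$ immediate. You also justify the compactness of $(\mathcal{U},d_{\mathcal{U}})$ via closedness of the Lebesgue-marginal constraint and the fact that $W_2$ metrizes weak convergence on the compact set $[0,T]\times U$, which the paper only asserts in Section~\ref{sec-main-result}.
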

\begin{proof}
The sequence $(Q^n)_{n \ge 0}$ takes values in the compact space $\mathcal{U}$ of measures. 
Since each $Q^n$ is a $\mathcal{U}$-valued random variable, the corresponding family of probability distributions is tight by Prokhorov’s theorem.
\end{proof}
\subsection{Admissibility and optimality of the limit of $(P^n)_{n\ge1}$}\label{subsection 4.2}
The proof of Theorem~\ref{MFGR} is carried out in two steps. 
First, we show that the limit of $(P^n)_{n \geq 1}$ is admissible. 
Second, we prove that this limit is optimal.\\

\noindent \textbf{Step 1: Admissibility} 
 
\begin{proof}
Lemmas \ref{lemma2} and  \ref{lemma3} implies that the sequence of processes $(X^n, K^n, M^n, Q^n, X^n_0)$ is tight on the space $\Gamma:= [\mathcal{D}([0,T], \mathbb{R^d})]^2 \times \mathcal{C}_{S'}([0,T]) \times \mathcal{V} \times \bar{D}$.
By Skorokhod representation Lemma, there exists a probability space $(\hat{\Omega},\hat{\mathcal{F}}, \hat{\mathbb{P}})$, a sequence $(\hat{X}^n, \hat{K}^n, \hat{M}^n, \hat{Q}^n, \hat{X}^n_0)$ and $(\hat{X}, \hat{K}, \hat{M}, \hat{Q}, \hat{Y})$ defined on this space such that 
\begin{description}
\item[E.1)] for each $n \in \mathbb{N},$
\begin{equation}
Law(\hat{X}^n, \hat{K}^n, \hat{M}^n, \hat{Q}^n, \hat{X}^n_0) = Law(X^n, K^n, M^n, Q^n, X^n_0).
\end{equation}
\item[E.2)] \label{eq:E.2}
there exists a sub-sequence $(\hat{X}^{n_k}, \hat{K}^{n_k}, \hat{M}^{n_k}, \hat{Q}^{n_k}, \hat{X}^{n_k}_0)$ of $(\hat{X}^n, \hat{K}^n, \hat{M}^n, \hat{Q}^n, \hat{X}^n_0)$, denoted again by $(\hat{X}^n, \hat{K}^n, \hat{M}^n, \hat{Q}^n, \hat{X}^n_0)$, it converges almost surely $\hat{\mathbb{P}}-a.s.$ to $(\hat{X}, \hat{K}, \hat{M}, \hat{Q}, \hat{Y})$ in the space $\Gamma$.
\item[E.3)] $(\hat{X}^n, \hat{K}^n)$ converges to $(\hat{X}, \hat{K})$, $\mathrm{d}t\times \hat{\mathbb{P}}-a.s.$, and $(\hat{X}^n_T, \hat{K}^n_T) \to (\hat{X}_T, \hat{K}_T)$ $\hat{\mathbb{P}}-a.s.$
\end{description}


Let $(X', K')$ denote the unique solution to the reflected SDE
\begin{equation}\label{optimaleq2}
X'_t + K'_t= \hat{Y} 
+ \displaystyle\int_{0}^{t}\!\int_{U} b(s, \hat{X}_s, \mu_s, u)\,\hat{Q}_s(\mathrm{d}u)\,\mathrm{d}s 
+ \displaystyle\int_{0}^{t}\!\int_{U} \sigma(s, \hat{X}_{s}, \mu_{s}, u)\,\hat{M}(\mathrm{d}u,\mathrm{d}s) 
,
\end{equation}
where $\mu_t := \hat{\mathbb{P}}_{\hat{X}_t}$.\\
The initial condition satisfies
\begin{equation*}
\hat{\mathbb{P}} \circ (\hat{Y})^{-1} 
= \lim_{n \to \infty} \hat{\mathbb{P}} \circ (\hat{X}^n_0)^{-1} 
= \lim_{n \to \infty} \hat{\mathbb{Q}}^n \circ (X^n_0)^{-1} 
= \lambda.
\end{equation*}
Equation~\eqref{equation1}, together with property \textbf{(E.1)}, implies that $(\hat{X}^n, \hat{K}^n)$ satisfy the following equation
\begin{equation*}
\hat{X}^n_t = \hat{H}^n_t+ \hat{Z}^n_t-\hat{K}^n_t, \forall t \in [0, T],
\end{equation*}
where 
\begin{align*}
\hat{Z}^n_t := &\displaystyle\int_{0}^{t}\displaystyle\int_{U}^{} b(s,\hat{X}^n_s, \mu^n_s, u)\hat{Q}^n_s(\mathrm{d}u)\mathrm{d}s+\displaystyle\int_{0}^{t}\int_{U}^{}\sigma(s,\hat{X}^n_s, \mu^n_s, u) \hat{M}^n(\mathrm{d}u,\mathrm{d}s), \,\, \hat{H}^n_t :=\hat{X}^n_0, \\
&\text{and}\quad \mu^n= \mathbb{Q}^n\circ(X^n)^{-1} = \hat{\mathbb{P}}^n\circ(\hat{X}^n)^{-1}.
\end{align*}
The following convergences hold in probability in $\mathcal{D}([0, T]; \mathbb{R}^d)$:
\begin{equation}
\begin{split}
&\displaystyle\int_{0}^{\cdot}\displaystyle\int_{U}^{} b(s,\hat{X}^n_s, \mu^n_s, u)\hat{Q}^n_s(\mathrm{d}u)\mathrm{d}s \to \displaystyle\int_{0}^{\cdot}\displaystyle\int_{U}^{} b(s,\hat{X}_s, \mu_s, u)\hat{Q}_s(\mathrm{d}u)\mathrm{d}s, \\
&\displaystyle\int_{0}^{\cdot}\int_{U}^{}\sigma(s, \hat{X}^n_s, \mu^n_s, u) \hat{M}^n(\mathrm{d}u, \mathrm{d}s) \to \displaystyle\int_{0}^{\cdot}\int_{U}^{}\sigma(s,\hat{X}_{s}, \mu_{s}, u) \hat{M}(\mathrm{d}u,\mathrm{d}s). \label{con ofz}
\end{split}
\end{equation}
The first convergence is easy to prove, then we will prove only the second.\\
We have, 
\begin{equation}
\begin{split}
&\displaystyle\int_{0}^{\cdot}\int_{U}^{}\sigma(s, \hat{X}^n_s, \mu^n_s, u) \hat{M}^n(\mathrm{d}u,\mathrm{d}s) - \displaystyle\int_{0}^{\cdot}\int_{U}^{}\sigma(s, \hat{X}_{s}, \mu_{s}, u) \hat{M}(\mathrm{d}u,\mathrm{d}s)\\
&=\displaystyle\int_{0}^{\cdot}\int_{U}^{}(\sigma(s, \hat{X}^n_s, \mu^n_s, u) -\sigma(s, \hat{X}_{s}, \mu_{s}, u))\hat{M}^n(\mathrm{d}u,\mathrm{d}s) \\
&+\displaystyle\int_{0}^{\cdot}\int_{U}^{}\sigma(s, \hat{X}_{s}, \mu_{s}, u) \hat{M}^n(\mathrm{d}u,\mathrm{d}s)- \displaystyle\int_{0}^{\cdot}\int_{U}^{}\sigma(s, \hat{X}_{s}, \mu_{s}, u) \hat{M}(\mathrm{d}u, \mathrm{d}s). \label{condition16}
\end{split}
\end{equation}
By applying the Burkholder--Davis--Gundy inequality, together with properties \textbf{(E.1)--(E.3)} and the Lipschitz condition of $\sigma$, we obtain
\begin{align*}
\mathbb{E}^{\hat{\mathbb{P}}} \Bigg(
\sup_{0 \leq t \leq T} 
\Big| \int_{0}^{t} \int_{U} 
\big( \sigma(s, \hat{X}^n_{s}, \mu^n_{s}, u) 
- \sigma(s, \hat{X}_{s}, \mu_{s}, u) \big) 
\hat{M}^n(\mathrm{d}u, \mathrm{d}s) 
\Big|^2 \Bigg)
&\le 
C\, \mathbb{E}^{\hat{\mathbb{P}}} 
\left( \int_{0}^{T} 
\big| \hat{X}^n_t - \hat{X}_t \big|^2 
\, \mathrm{d}t \right)
\\
&\longrightarrow 0,
\end{align*}
which implies that the first term in~\eqref{condition16} converges to $0$ in probability in $\mathcal{C}([0, T]; \mathbb{R}^d)$.\\
By jointly considering Condition~\eqref{Condition 2.1} and Theorem~2.1 from~\cite{Cho}, we derive
\begin{equation*}\label{secondterm}
\int_{0}^{\cdot}\int_{U} 
\sigma(s, \hat{X}_{s}, \mu_{s}, u)\, 
\hat{M}^n(\mathrm{d}u, \mathrm{d}s)
\xrightarrow{\;\mathcal{P}\;}
\int_{0}^{\cdot}\int_{U} 
\sigma(s, \hat{X}_{s}, \mu_{s}, u)\, 
\hat{M}(\mathrm{d}u, \mathrm{d}s) 
\quad \text{in probability in } \mathcal{D}([0, T]; \mathbb{R}^d).
\end{equation*}
Consequently,
\begin{equation*}
(\hat{H}^n, \hat{Z}^n) \;\xrightarrow{\mathcal{P}}\; (\hat{H}, \hat{Z})
\quad \text{in probability in } \mathcal{D}([0, T]; \mathbb{R}^{2d}),
\end{equation*}
where
\begin{align*}
\hat{Z}_t 
&:= 
\int_{0}^{t}\int_{U} 
b(s, \hat{X}_{s}, \mu_s, u)\, 
\hat{Q}_s(\mathrm{d}u)\,\mathrm{d}s
+ 
\int_{0}^{t}\int_{U} 
\sigma(s, \hat{X}_{s}, \mu_{s}, u)\, 
\hat{M}(\mathrm{d}u, \mathrm{d}s),
\\[4pt]
\hat{H}_t &:= \hat{Y}.
\end{align*}

On the other hand, for every $q \ge 0$ and for each discrete predictable process 
of the form
\begin{equation*}
\hat{U}^{n}_{s}
=
\hat{U}^{n}_{0}
+
\sum_{i=0}^{k}
\hat{U}^{n}_{i}\,
\mathbf{1}_{\{t_i < s \le t_{i+1}\}},
\end{equation*}
where 
\begin{equation*}
0 = t_0 < t_1 < \cdots < t_k = q, 
\qquad
\hat{U}^{n}_{i} \text{ is } \hat{\mathcal{F}}^{n}_{t_i}\text{-measurable},
\qquad
|\hat{U}^{n}_{i}| \le 1,
\quad i \in \{0,\dots,k\},\ n,k \in \mathbb{N},
\end{equation*}
we have
\begin{align*}
\mathbb{E}^{\hat{\mathbb{P}}} 
\Big| \int_{0}^{q} \hat{U}^n_{s} \, \mathrm{d} \hat{Z}^n_s \Big|^2 
&\leq 
2\,\mathbb{E}^{\hat{\mathbb{P}}} 
\Bigg|
\int_{0}^{q}\int_{U} 
\hat{U}^n_s\, 
b(s, \hat{X}^n_s, \mu^n_s, u)\, 
\hat{Q}^n_s(\mathrm{d}u)\,\mathrm{d}s 
\Bigg|^2
\\
&\quad +\, 
2\,\mathbb{E}^{\hat{\mathbb{P}}}
\Bigg|
\int_{0}^{q}\int_{U} 
\hat{U}^n_s\, 
\sigma(s, \hat{X}^n_s, \mu^n_s, u)\, 
\hat{M}^n(\mathrm{d}u, \mathrm{d}s)
\Bigg|^2
\\[4pt]
&\le 
C\left( 1 + 
\sup_{n} 
\mathbb{E}^{\hat{\mathbb{P}}}
\sup_{0 \le s \le q} 
|\hat{X}^n_s|^2 
\right)
< \infty.
\end{align*}
The last inequality follows from assumption~\hyperlink{A.3}{(A.3)} and from~\eqref{eq3.1}. 
Therefore, the sequence $(\hat{Z}^n)$ of 
$(\hat{\mathcal{F}}^{n}_{t})$-adapted semimartingales 
satisfies Stricker’s condition~\textbf{(UT)}~\cite{stricker}.\\
We now verify that the assumptions of~\cite[Theorem~4.3]{Lauka} are fulfilled. 
Hence, we can apply points~(ii) and~(iii) of that theorem. 
Since the reflected SDE~\eqref{optimaleq2} admits a unique weak solution, 
point~(iii) implies that
\begin{equation*}
\hat{X}^n \to X'
\quad \text{in distribution with respect to the S-topology.}
\end{equation*}
Applying point (ii) with $ g := I_{\text{id}}$, we obtain
\begin{equation*}
(\hat{X}^n_{t_1}, \hat{X}^n_{t_2}, ..., \hat{X}^n_{t_q}, \hat{Z}^n) \to  (X'_{t_1}, X'_{t_2}, ..., X'_{t_q}, \hat{Z}) \quad \text{in distribution in} \quad \mathbb{R}^{dq} \times \mathcal{D}([0, T], \mathbb{R}^d),
\end{equation*}
we may choose the time points $ t_0, t_1, ..., t_q$ to form a partition of the interval $[0, T]$, with $ t_0 = 0 < t_1 < \cdots < t_q = T$, since both $ \hat{Z} $ and $\hat{H}$ are continuous.\\
Since the sequence $ (\hat{X}^n, \hat{Z}^n) $ is tight with respect to the S-Skorokhod topology, it follows from \cite[Theorem 3.5]{jakub} that
\begin{equation*}
(\hat{X}^n, \hat{Z}^n) \xrightarrow{d} (\hat{X}, \hat{Z}) \quad \text{in distribution with respect to the S-topology}.
\end{equation*}
Using the fact that addition is sequentially continuous in the S-topology, according to Remark~3.12 in~\cite{jakub}, we get
\begin{equation*}
\hat{K}^n = \hat{Z}^n - \hat{X}^n \to \hat{Z} - X' = K' \quad \text{in distribution with respect to the S-topology}.
\end{equation*}
By applying the same arguments as those used in the proof of Lemma~3.5 in~\cite{bahlali-PDE}, we deduce 
\begin{equation*}\label{adjointopt}
(\hat{X}^n, \hat{K}^n) \xrightarrow{d} (X', K')
\quad \text{in distribution with respect to the uniform topology}.
\end{equation*}
From the uniqueness of the limit in law, property~\textbf{(E.2)} implies that
\begin{equation}\label{same-law-in (X,K)}
\mathrm{Law}(\hat{X}, \hat{K}) = \mathrm{Law}(X', K').
\end{equation}
Combining this identity with the uniform estimates obtained in~\eqref{eq3.1}, we deduce that
\begin{equation*}
\mathbb{E}^{\hat{\mathbb{P}}}
\Bigg(
\sup_{0 \le s \le T} 
|\hat{X}^n_s - \hat{X}_s|^2
+
\sup_{0 \le s \le T} 
|\hat{K}^n_s - \hat{K}_s|^2
\Bigg)
\longrightarrow 0.
\end{equation*}
Consequently,
\begin{equation*}
\mu^n \longrightarrow \mu 
\quad \text{in } \mathcal{P}_2\big(\mathcal{C}^d\big).
\end{equation*}
Hence, property~1) in Theorem~\ref{MFGR} holds.\\
Furthermore, according to~\eqref{optimaleq2} and \eqref{same-law-in (X,K)}, 
the pair $(\hat{X}, \hat{K})$ satisfies the following reflected SDE:
\begin{equation}\label{reflectedSDE}
\begin{cases}
\mathrm{d}\hat{X}_t+ 
\mathrm{d}\hat{K}_t
= 
\displaystyle\int_{U} b(t, \hat{X}_t, \mu_t, u)\, 
\hat{Q}_t(\mathrm{d}u)\,\mathrm{d}t
+ 
\displaystyle\int_{U} \sigma(t, \hat{X}_t, \mu_t, u)\,
\hat{M}(\mathrm{d}u,\mathrm{d}t)
, \\[6pt]
\hat{\mathbb{P}} \circ (\hat{X}_0)^{-1} 
= \hat{\mathbb{P}} \circ (\hat{Y})^{-1} = \lambda, \\[4pt]
\end{cases}
\end{equation}
It follows from Itô’s formula that
\begin{equation*}
P:=\hat{\mathbb{P}} \circ (\hat{X}, \hat{K}, \hat{Q})^{-1}
\in 
\mathcal{R}_{\mathrm{ref}(D)}(\mu), \quad \text{where} \quad \mu = \hat{\mathbb{P}} \circ (\hat{X})^{-1}= P\circ(X)^{-1}.
\end{equation*}
By the continuity of the mappings  
\begin{equation*}
(x, \mu, q) \longmapsto 
\int_0^{T} \int_U 
q_t(\mathrm{d}u)\, 
f(t, x_t, \mu_t, u)\,\mathrm{d}t 
+ g(x_T, \mu_T),
\qquad
(x, k, \mu) \longmapsto 
\int_{0}^{T} 
h(t, x_t, \mu_t)\,\mathrm{d}k_t,
\end{equation*}
(the continuity of the first mapping follows from the same arguments as in Lemma A.4 of \cite{luciano}, and that of the second from Lemma 4.6 in \cite{badr}), together with the assumptions on $(f, g, h)$ and the uniform estimates in~\eqref{eq3.1}, we deduce that the desired convergence holds,
\begin{align}\label{Con-cost1}
\lim_{n \to \infty} J^n(\mu^n, P^n)
&= 
\lim_{n \to \infty} 
\mathbb{E}^{\mathbb{Q}^n}
\Bigg(
g(X^n_T, \mu^n_T)
+
\int_0^T \int_U 
Q^n_t(\mathrm{d}u)\,
f(t, X^n_t, \mu^n_t, u)\,\mathrm{d}t
+
\int_0^T 
h(t, X^n_t, \mu^n_t)\,
\mathrm{d}K^n_t
\Bigg)
\notag\\[4pt]
&=
\lim_{n \to \infty} 
\mathbb{E}^{\hat{\mathbb{P}}}
\Bigg(
g(\hat{X}^n_T, \mu^n_T)
+
\int_0^T \int_U 
\hat{Q}^n_t(\mathrm{d}u)\,
f(t, \hat{X}^n_t, \mu^n_t, u)\,\mathrm{d}t
+
\int_0^T 
h(t, \hat{X}^n_t, \mu^n_t)\,
\mathrm{d}\hat{K}^n_t
\Bigg)
\notag\\[4pt]
&=\mathbb{E}^{\hat{\mathbb{P}}}
\Bigg(
g(\hat{X}_T, \mu_T)
+
\int_0^T \int_U 
\hat{Q}_t(\mathrm{d}u)\,
f(t, \hat{X}_t, \mu_t, u)\,\mathrm{d}t
+
\int_0^T 
h(t, \hat{X}_t, \mu_t)\,
\mathrm{d}\hat{K}_t
\Bigg)
\notag\\[4pt]
&=
J\left(\mu, P\right),
\end{align}
which establishes point~2) of Theorem~\ref{MFGR}.\\
Finally, it remains to verify that
\begin{equation*}
P \in \mathcal{R}^*_{\mathrm{ref}(D)}(\mu).
\end{equation*}
\end{proof}

\noindent \textbf{Step 2: Optimality}

To establish the optimality of 
$ P$,
we first introduce the following lemma, which provides a link between the cost functional associated with the penalized dynamics and that of the reflected dynamics. 
This connection enables the transfer of optimality from the penalized setting to the reflected one.
\begin{lemma}\label{adjointlemma}
For every $ P_1 \in \mathcal{R}_{\mathrm{ref}(D)}(\mu)$, 
there exists a sequence 
$ P_1^{n} \in \mathcal{R}_n(\mu^n) $ such that
\begin{equation}\label{optimalestimation}
J(\mu, P_1)
= 
\lim_{n \to \infty} J^{n}(\mu^{n}, P_1^{n}).
\end{equation}
\end{lemma}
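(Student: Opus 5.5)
Fix $P_1 \in \mathcal{R}_{\mathrm{ref}(D)}(\mu)$. The plan is to build the penalized laws $P_1^n$ on the same probability space, driven by the same relaxed control and martingale measure as $P_1$, and then pass to the limit.

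By Proposition~\ref{rep martingale for Reflected dynamics}, there is a filtered space $(\Omega',\mathcal{F}'_t,\mathbb{Q}')$ carrying a process $(X',K')$, a control $Q'$ and orthogonal martingale measures $M'$ with intensity $Q'_t(\mathrm{d}u)\mathrm{d}t$, such that $\mathbb{Q}'\circ(X',K',Q')^{-1}=P_1$ and \eqref{rep-eq1} holds with the flow $\mu$. On this same space, for each $n$, let $X^{1,n}$ be the unique strong solution of the penalized equation
\begin{equation*}
\mathrm{d}X^{1,n}_t=\int_U b(t,X^{1,n}_t,\mu^n_t,u)\,Q'_t(\mathrm{d}u)\,\mathrm{d}t-n\big(X^{1,n}_t-\pi_{\bar D}(X^{1,n}_t)\big)\mathrm{d}t+\int_U\sigma(t,X^{1,n}_t,\mu^n_t,u)\,M'(\mathrm{d}u,\mathrm{d}t),
\end{equation*}
with $X^{1,n}_0=X'_0$. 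Here $\mu^n$ is the penalized equilibrium flow from Theorem~\ref{MFG-existence for penalization}. Set $K^{1,n}_t:=\int_0^t n(X^{1,n}_s-\pi_{\bar D}(X^{1,n}_s))\,\mathrm{d}s$. By Proposition~\ref{rep martingale for nMFG} (or directly by Itô's formula), the law $P_1^n:=\mathbb{Q}'\circ(X^{1,n},Q',K^{1,n})^{-1}$ belongs to $\mathcal{R}_n(\mu^n)$. Note that $J^n(\mu^n,P_1^n)$ contains the running term $\int_0^T h(t,X^{1,n}_t,\mu^n_t)\,\mathrm{d}K^{1,n}_t$.

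The key step is to show
\begin{equation*}
\mathbb{E}^{\mathbb{Q}'}\Big(\sup_{t\le T}|X^{1,n}_t-X'_t|^2+\sup_{t\le T}|K^{1,n}_t-K'_t|^2\Big)\to0 .
\end{equation*}
Two sources of error must be controlled: the penalization, and the mismatch between the flows $\mu^n$ and $\mu$.
\begin{itemize}
\item \textbf{Penalization error.} I would follow Lions--Menaldi--Sznitman \cite{Lion-no-cont} and Menaldi \cite{menaldi}. First, reproduce Lemma~\ref{lemma1} for $X^{1,n}$ to get uniform moments and $\sup_n\mathbb{E}|K^{1,n}|_{[0,T]}^q<\infty$. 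Next, obtain $\mathbb{E}\sup_t \operatorname{dist}^2(X^{1,n}_t,\bar D)\to0$ by applying Itô's formula to $\operatorname{dist}^2(X^{1,n}_t,\bar D)$; this is a $C^{1,1}$ function, and its drift contains $-2n\,\operatorname{dist}^2$. Finally, apply Itô to $|X^{1,n}_t-X'_t|^2$. The cross terms $-\langle X^{1,n}-X',\mathrm{d}K^{1,n}\rangle$ and $\langle X^{1,n}-X',\mathrm{d}K'\rangle$ are handled by writing $X^{1,n}=\pi_{\bar D}(X^{1,n})+(X^{1,n}-\pi_{\bar D}(X^{1,n}))$ and using the convexity inequality \eqref{convex} together with the inward-normal property of $K'$. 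What remains is bounded by $C\,\mathbb{E}\big(\sup_t\operatorname{dist}(X^{1,n}_t,\bar D)\,(|K'|_T+|K^{1,n}|_T)\big)$, which tends to $0$ by Cauchy--Schwarz.
\item \textbf{Flow mismatch.} The Lipschitz bound (A.3) gives the contribution $C\int_0^T W_2(\mu^n_t,\mu_t)^2\,\mathrm{d}t$. This tends to $0$ by property 1) of Theorem~\ref{MFGR}, already established in Step~1.
\end{itemize}
A Gronwall argument then yields the convergence of $X^{1,n}$. The convergence of $K^{1,n}$ follows from the equation, since $K^{1,n}=X^{1,n}_0-X^{1,n}+(\text{drift}+\text{martingale terms})$ and the martingale term converges by BDG.

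It remains to pass to the limit in the cost. The terms in $f$ and $g$ converge by continuity (A.1), dominated convergence, and the uniform $q'$-moments with $q'>2$, which give uniform integrability. The boundary term $\int_0^T h(t,X^{1,n}_t,\mu^n_t)\,\mathrm{d}K^{1,n}_t$ converges to $\int_0^T h(t,X'_t,\mu_t)\,\mathrm{d}K'_t$ in probability. This uses the continuity of $(x,k,\mu)\mapsto\int h\,\mathrm{d}k$ under uniform convergence with bounded variation (Lemma 4.6 in \cite{badr}), together with $\sup_n\mathbb{E}|K^{1,n}|_T^q<\infty$. The expectation then converges by uniform integrability from the growth bound (A.4) and Lemma~\ref{lemma1}-type estimates.

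I expect the main obstacle to be the penalization step, specifically the sign control of the cross terms and the bound $\mathbb{E}\sup_t\operatorname{dist}^2(X^{1,n}_t,\bar D)\to0$ under merely Lipschitz coefficients driven by martingale measures. A secondary subtlety is that the convergence of $\int h\,\mathrm{d}K^{1,n}$ must hold when $K^{1,n}$ converges uniformly while only the total variation is bounded.
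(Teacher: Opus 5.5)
Your construction of $P_1^n$ is the same as the paper's: the same $Q'$, $M'$, initial value and flow $\mu^n$ on $(\Omega',\mathcal{F}'_t,\mathbb{Q}')$. You reach the convergence by a genuinely different route, though. The paper only asserts, by ``arguments analogous to Step~1'', that $(X'^n,K'^n)\to(X',K')$ in law. Those arguments are $S$-topology tightness, identification of the limit through \cite[Theorem~4.3]{Lauka} and uniqueness for the reflected SDE, and an upgrade to the uniform topology. It then passes to the limit in the cost by a Skorokhod representation and uniform integrability. You instead prove a pathwise $L^2$ stability estimate in the spirit of Lions--Menaldi--Sznitman, absorbing the flow mismatch through the $W_2$-Lipschitz bound in (A.3) and $\sup_t W_{\mathbb{R}^d,2}(\mu^n_t,\mu_t)\to0$.

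Your route gives strong convergence to the very same $(X',K')$. Because $Q'$ never leaves $(\Omega',\mathbb{Q}')$, the term $\int_0^T\int_U f\,Q'_t(\mathrm{d}u)\,\mathrm{d}t$ passes to the limit directly. In the paper's Skorokhod step only $(X'^n,K'^n)$ are re-represented, yet the cost is still integrated against the original $Q'$; strictly, that requires representing $(X'^n,K'^n,Q')$ jointly. The price of your route is reliance on Lipschitz continuity in $(x,\mu)$, on convexity of $D$ (both available here), and on a genuine supremum estimate for the distance to $\bar D$. The compactness route reuses Step~1 but yields only convergence in law.

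Two details need more than you indicate.

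First, Itô's formula for $\operatorname{dist}^2(X^{1,n}_t,\bar D)$ gives $\mathbb{E}\int_0^T\operatorname{dist}^2(X^{1,n}_t,\bar D)\,\mathrm{d}t=O(1/n)$, but not the supremum bound. The second-order term carries no factor of $\operatorname{dist}$, so it cannot be absorbed by $-2n\operatorname{dist}^2$, and a direct BDG bound on the supremum is only $O(1)$. You genuinely need the supremum, because $\operatorname{dist}(X^{1,n},\bar D)$ is integrated against $\mathrm{d}|K'|$, which is singular with respect to $\mathrm{d}t$. Apply Itô to $\operatorname{dist}^{2p}$ with $1<p\le q'/2$, and use Young to absorb $\operatorname{dist}^{2p-2}|\sigma|^2$ into $n\operatorname{dist}^{2p}$. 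Combined with BDG, this gives $\mathbb{E}\sup_t\operatorname{dist}^{2p}(X^{1,n}_t,\bar D)\le Cn^{1-p}$.

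Second, estimates of the type of Lemma~\ref{lemma1} control the total variations only in $L^{q'/2}$, so Cauchy--Schwarz is unavailable when $q'<4$. Localize at $\tau_R=\inf\{t:|K'|_t\ge R\}\wedge T$, obtain $\sup_t|X^{1,n}_t-X'_t|\to0$ in probability, and conclude in $L^2$ by uniform integrability.

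Finally, the $|K^{1,n}|_T$ part of your bound is unnecessary: by \eqref{convex} with $y=X'_t$, that cross term is at most $-n\operatorname{dist}^2(X^{1,n}_t,\bar D)\,\mathrm{d}t\le0$. What you need from $K'$ is $\langle y-X'_s,\mathrm{d}K'_s\rangle\le0$ for $y\in\bar D$. That is the orientation consistent with $\mathrm{d}X'+\mathrm{d}K'$ and with $K^{1,n}$.
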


\begin{proof}
By Proposition~\ref{rep martingale for Reflected dynamics}, 
we can construct a filtered probability space 
$(\Omega', \mathcal{F}'_t, \mathbb{Q}')$
supporting a $d$-dimensional 
$\mathcal{F}'_t$-adapted process 
$(X', K')$ and 
$m$ orthogonal $\mathcal{F}'_t$-martingale measures 
$ M' = (M'_1, \ldots, M'_m) $
on $ U \times [0,T]$ with intensity 
$ Q'_t(\mathrm{d}u)\,\mathrm{d}t$,
such that 
$ \mathbb{Q}' \circ (X', K', Q')^{-1} = P_1 $
and the following reflected SDE holds:
\begin{equation}\label{eq:refSDEX'}
\begin{cases}
\mathrm{d}X'_t+
\mathrm{d}K'_t
= 
\displaystyle\int_U b(t, X'_t, \mu_t, u)\,
Q'_t(\mathrm{d}u)\,\mathrm{d}t
+
\displaystyle\int_U \sigma(t, X'_t, \mu_t, u)\,
M'(\mathrm{d}u, \mathrm{d}t)
, \\[6pt]
\mathbb{Q}'\circ(X'_0)^{-1} = \lambda,
\end{cases}
\end{equation}
For each $ n \in \mathbb{N} $, consider the penalized SDE
\begin{equation}\label{eq:penalXn}
\begin{cases}
\mathrm{d}X'^n_t
=
\displaystyle\int_U 
\Big(b(t, X'^n_t, \mu^n_t, u)\,Q'_t(\mathrm{d}u)
- n\big(X'^n_t - \pi_{\bar{D}}(X'^n_t)\big)\Big)
\mathrm{d}t
+
\displaystyle\int_U 
\sigma(t, X'^n_t, \mu^n_t, u)\,
M'(\mathrm{d}u, \mathrm{d}t),\\
X'^n_0 = X'_{0},
\end{cases}
\end{equation}
and define 
$ P^n_1 := \mathbb{Q}' \circ (X'^n, Q')^{-1} $.
It is immediate that 
$P^n_1 \in \mathcal{R}_n(\mu^n) $.\\
Using the convergence $ \mu^n \to \mu $ and arguments analogous to those in \textbf{Step~1}, 
we deduce that
\begin{equation}\label{eq:convXnKn}
(X'^n, K'^n)
:=
\Biggl(X'^n, 
\int_0^{\cdot} n \bigl(X'^n_s - \pi_{\bar{D}}(X'^n_s)\bigr)\, \mathrm{d}s \Biggr)
\xrightarrow{d} 
(X', K'),
\end{equation}
in distribution with respect to the uniform topology.\\
Moreover, the uniform bound~\eqref{eq3.1} also holds for the sequence $(X'^n, K'^n)$.\\
By the Skorokhod representation theorem, 
there exists a probability space 
$(\tilde{\Omega}, \tilde{\mathcal{F}}, \tilde{\mathbb{P}})$ 
and random variables 
$(\tilde{X}'^n, \tilde{K}'^n)$, 
$(\tilde{X}', \tilde{K}')$
such that
\begin{equation*}
(\tilde{X}'^n, \tilde{K}'^n)
\to
(\tilde{X}', \tilde{K}')
\quad 
\text{a.s. in } 
\mathcal{C}([0,T]; \mathbb{R}^{2d}),
\end{equation*}
and
\begin{equation*}
Law(\tilde{X}'^n, \tilde{K}'^n)
= 
Law(X'^n, K'^n),
\ 
Law(\tilde{X}', \tilde{K}')
= 
Law(X', K').
\end{equation*}
Since $ f, g, $ and $ h $ are continuous and satisfy the growth conditions in assumption~\hyperlink{A.3}{(A.3)}, 
it follows that
\begin{align*}
& g(\tilde{X}'^n_T, \mu^n_T) - g(\tilde{X}'_T, \mu_T)
+ 
\int_0^T\int_U 
Q'_t(\mathrm{d}u)
\big(f(t, \tilde{X}'^n_t, \mu^n_t, u)
    - f(t, \tilde{X}'_t, \mu_t, u)\big)\,\mathrm{d}t
\\
&\quad 
+ 
\int_0^T 
h(t, \tilde{X}'^n_t, \mu^n_t)\,\mathrm{d}\tilde{K}'^n_t
- 
\int_0^T 
h(t, \tilde{X}'_t, \mu_t)\,\mathrm{d}\tilde{K}'_t
\;\longrightarrow\; 0,
\end{align*}
and the corresponding terms are uniformly integrable. 
Hence, by the dominated convergence theorem,
\begin{align*}
\lim_{n \to \infty} J^n(\mu^n, P^n_1)
&= 
\lim_{n \to \infty}
\mathbb{E}^{\mathbb{Q}'}
\Bigg(
g(X'^n_T, \mu^n_T)
+ 
\int_0^T\int_U 
Q'_t(\mathrm{d}u)\,
f(t, X'^n_t, \mu^n_t, u)\,\mathrm{d}t
+ 
\int_0^T 
h(t, X'^n_t, \mu^n_t)\,
\mathrm{d}K'^n_t
\Bigg)
\\[3pt]
&= 
\mathbb{E}^{\tilde{\mathbb{P}}}
\Bigg(
g(\tilde{X}'_T, \mu_T)
+ 
\int_0^T \int_U 
Q'_t(\mathrm{d}u)\,
f(t, \tilde{X}'_t, \mu_t, u)\,\mathrm{d}t
+ 
\int_0^T 
h(t, \tilde{X}'_t, \mu_t)\,
\mathrm{d}\tilde{K}'_t
\Bigg)
\\[3pt]
&= 
J(\mu, P_1).
\end{align*}
This establishes the desired result~\eqref{optimalestimation}.
\end{proof}

\begin{proof}[Proof of Step~2]
Let $ P_1 \in \mathcal{R}_{\mathrm{ref}(D)}(\mu) $ be arbitrary. 
By Lemma~\ref{adjointlemma}, there exists a sequence 
$ P_1^n \in \mathcal{R}_n(\mu^n) $ such that
\begin{equation}\label{optimalofP}
J(\mu, P_1) = \lim_{n \to \infty} J^n(\mu^n, P_1^n).
\end{equation}
Since $ P^n $ is optimal for each $n$, it follows that
\begin{equation*}
J^n(\mu^n, P^n) \le J^n(\mu^n, P_1^n),
\qquad n \in \mathbb{N}.
\end{equation*}
Using the convergence relations~\eqref{Con-cost1} and~\eqref{optimalofP}, we obtain
\begin{equation*}
J(\mu, P)
= 
\liminf_{n \to \infty} J^n(\mu^n, P^n)
\le 
\lim_{n \to \infty} J^n(\mu^n, P_1^n)
= 
J(\mu, P_1).
\end{equation*}
Therefore, $P$ belongs to $\mathcal{R}^{*}_{\mathrm{ref}(D)}(\mu)$.
\end{proof}

\section{Existence of Markovian and Strict Markovian MFG Solutions}
\label{sec-mark}

In this section, we establish the existence of a strict Markovian MFG solution with reflected dynamics. Throughout this section, we maintain Assumption~\ref{ass:A}, which guarantees the existence of a relaxed MFG solution by Theorem~\ref{MFGR}.

To construct a Markovian relaxed MFG solution, we employ a penalization approach. Specifically, we first introduce a penalized reflected SDE , and then apply a mimicking theorem to the penalized dynamics in order to obtain a relaxed Markovian MFG formulation. Subsequently, by letting the penalization parameter tend to infinity, we recover a relaxed Markovian MFG corresponding to the original reflected system. Finally, a measurable selection argument allows us to derive the existence of a strict-Markovian MFG solution.

Applying the mimicking theorem to the penalized system yields a model whose coefficients are only measurable, due to the regularity of the mapping $(t,x) \mapsto q(t,x)$. This limited smoothness necessitates additional analytical arguments. To address these difficulties, we draw on methods developed for penalization schemes in reflected SDEs with merely measurable coefficients (see, e.g., \cite{mouchtabih,Slominski-measurable}). In order to ensure the convergence of the penalization procedure, we introduce the following structural assumption.

\begin{assumption}{\textbf{(M)}}
\label{ass:M}
The diffusion coefficient $\sigma$ is uniformly elliptic: there exists a constant $\beta > 0$ such that
\begin{equation*}
\sigma(t, x, \mu, u)\sigma(t, x, \mu, u)^{\top} \geq \beta I_d, 
\quad 
\forall (t, x, \mu, u) \in [0,T]\times\mathbb{R}^d\times\mathcal{P}_2(\mathbb{R}^d)\times U.
\end{equation*}
\end{assumption}
 
\begin{assumption}{\textbf{(C)}}
\label{ass:C}
For all $(t, x, \mu) \in [0,T] \times \mathbb{R}^d \times \mathcal{P}_2(\mathbb{R}^d)$, the subset
\begin{equation*}
\mathcal{S}(t,x,\mu)
:= 
\bigl\{
\bigl(b(t,x,\mu,\alpha),\ \sigma\sigma^{\top}(t,x,\mu,\alpha),\ z\bigr)
:\ \alpha \in U,\ z \ge f(t,x,\mu,\alpha)
\bigr\}
\subset \mathbb{R}^d \times \mathbb{R}^{d \times d} \times \mathbb{R}
\end{equation*}
is convex.
\end{assumption}

\begin{theorem}\label{theorem:Markovian}
Let Assumptions~\ref{ass:A} and~\ref{ass:M} hold. Then there exists a relaxed Markovian MFG solution. 
If, in addition, Assumption~\ref{ass:C} holds, there exists a \emph{strict} Markovian MFG solution.
\end{theorem}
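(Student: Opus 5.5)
The plan follows the strategy announced at the start of this section. First, fix $n$ and obtain a Markovian MFG solution of the penalized problem $MFG_n$. Let $P^n$ be the relaxed penalized MFG solution from Theorem~\ref{MFG-existence for penalization}, with state $X^n$, control $Q^n$ and $\mu^n=P^n\circ X^{-1}$, represented as in Proposition~\ref{rep martingale for nMFG}. Define the measurable kernel
\begin{equation*}
q^n(t,x):=\mathbb{E}^{\mathbb{Q}^n}\bigl[Q^n_t\,\big|\,X^n_t=x\bigr]\in\mathcal{P}(U),
\end{equation*}
and the Markovian coefficients
\begin{equation*}
\bar b^n(t,x)=\int_U b^n(t,x,\mu^n_t,u)\,q^n(t,x)(\mathrm{d}u),\qquad \bar a^n(t,x)=\int_U \sigma\sigma^\top(t,x,\mu^n_t,u)\,q^n(t,x)(\mathrm{d}u).
\end{equation*}
By the mimicking theorem (Gyöngy, or Brunick--Shreve in the relaxed form used by Lacker), there is a weak solution $\bar X^n$ of $\mathrm{d}\bar X^n_t=\bar b^n(t,\bar X^n_t)\,\mathrm{d}t+(\bar a^n)^{1/2}(t,\bar X^n_t)\,\mathrm{d}W_t$ whose one-dimensional marginals coincide with those of $X^n$. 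Then $\bar P^n:=\mathrm{Law}(\bar X^n,\mathrm{d}t\,q^n(t,\bar X^n_t)(\mathrm{d}u))$ lies in $\mathcal{R}_n(\mu^n)$. The costs $f^n$ and $g$ depend only on the time marginals, so $J^n(\mu^n,\bar P^n)=J^n(\mu^n,P^n)$ and $\bar P^n$ is optimal. One subtlety is that the consistency condition of $MFG_n$ involves only marginal flows, while $\mu^n$ is a law on path space. I will check that the mean-field interaction enters only through $\mu^n_t$, so consistency holds at the level of marginals. This gives a Markovian relaxed penalized MFG solution with merely measurable, uniformly elliptic (by \textbf{(M)}) coefficients.

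Second, I will pass to the limit $n\to\infty$ exactly as in Section~\ref{section-proof}. Lemma~\ref{lemma1} still applies, because its proof only used the growth bounds and the monotonicity \eqref{convex}. Tightness of $(\bar X^n,\bar K^n,\bar Q^n)$ then follows from Lemmas~\ref{lemma2}--\ref{lemma3}. The main obstacle is identifying the limit: the map $x\mapsto q^n(t,x)$ is only measurable, so the stability argument of Step~1 based on Lipschitz continuity fails. Here I would use \textbf{(M)} in two ways.

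\begin{itemize}
\item Krylov's estimate, uniform in $n$ since $\beta$ and the growth constants do not depend on $n$, gives $\mathbb{E}\int_0^T|\varphi(t,\bar X^n_t)|\,\mathrm{d}t\le C\|\varphi\|_{L^{d+1}}$. This allows approximating $q^n$ by continuous kernels and passing to the limit in $\int_0^t\int_U\mathcal{L}\phi(s,\bar X^n_s,\mu^n_s,u)\,\bar Q^n_s(\mathrm{d}u)\,\mathrm{d}s$, following \cite{mouchtabih,Slominski-measurable}.
\item To keep the Markovian structure in the limit, I will work with the joint measures $\mathrm{d}t\,\mathrm{Law}(\bar X^n_t)(\mathrm{d}x)\,q^n(t,x)(\mathrm{d}u)$ on $[0,T]\times\bar D\times U$, which are relatively compact. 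A limit point disintegrates as $\mathrm{d}t\,\mu_t(\mathrm{d}x)\,q(t,x)(\mathrm{d}u)$, and the uniform Krylov/Aronson-type density bounds provided by ellipticity should let the weak convergence of these joint measures identify the limit martingale problem with control $q(t,X_t)$.
\end{itemize}

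Admissibility in $\mathcal{R}_{\mathrm{ref}(D)}(\mu)$ then follows from the S-topology argument of Step~1, and optimality follows from Lemma~\ref{adjointlemma} together with the argument of Step~2, which go through unchanged. Cost convergence follows from \eqref{Con-cost1}. This yields a relaxed Markovian MFG solution.

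Third, under \textbf{(C)} I will apply a measurable selection, as in Lacker's strict-Markovian argument. For the relaxed Markovian solution with kernel $q$, the vector
\begin{equation*}
\Bigl(\int_U b\,\mathrm{d}q(t,x),\ \int_U\sigma\sigma^\top\,\mathrm{d}q(t,x),\ \int_U f\,\mathrm{d}q(t,x)\Bigr)
\end{equation*}
lies in the closed convex hull of $\mathcal{S}(t,x,\mu_t)$. By convexity and compactness of $U$, it lies in $\mathcal{S}(t,x,\mu_t)$ itself. A measurable selection theorem (Filippov) then provides $\alpha(t,x)\in U$ that matches the drift and diffusion and satisfies $f(t,x,\mu_t,\alpha)\le\int_U f\,\mathrm{d}q$. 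The resulting reflected Markovian SDE has the same generator, hence the same law of $(X,K)$. Since $h$ and $g$ are untouched, its cost is no larger than that of the relaxed solution, so it is again optimal and consistent. This gives a strict Markovian solution.
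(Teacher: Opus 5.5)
\textbf{Gap in your second step.} The gap sits exactly where you locate the main obstacle. Treat $\bar Q^n=\mathrm{d}t\,q^n(t,\bar X^n_t)(\mathrm{d}u)$ as random elements of $\mathcal{U}$. Then Steps~1--2 of Section~\ref{section-proof} go through verbatim and give a relaxed reflected MFG solution $\mathrm{Law}(\hat X,\hat K,\hat Q)$, where $\hat Q$ is the limit in $\mathcal{U}$. Your argument does not show that $\hat Q=\mathrm{d}t\,q(t,\hat X_t)(\mathrm{d}u)$.

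Convergence of the joint measures $\mathrm{d}t\,\mathrm{Law}(\bar X^n_t)(\mathrm{d}x)\,q^n(t,x)(\mathrm{d}u)$ only yields $\mathbb{E}[\hat Q_t\mid \hat X_t]=q(t,\hat X_t)$. This is a one-time statement, and it is the \emph{hypothesis} of a mimicking theorem, not its conclusion. Identifying the martingale problem requires limits of $\mathbb{E}\bigl[\Psi(\bar X^n_{\cdot\wedge s})\int_s^t\int_U\mathcal{L}\phi(r,\bar X^n_r,\mu^n_r,u)\,q^n(r,\bar X^n_r)(\mathrm{d}u)\,\mathrm{d}r\bigr]$ with path functionals $\Psi$. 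The joint measures do not see these, and one-time density bounds do not supply them.

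Krylov's estimate does not rescue this. It lets you replace a \emph{fixed} measurable function by continuous ones uniformly in $n$. Here the integrands $\int_U\mathcal{L}\phi\,\mathrm{d}q^n$ change with $n$ and converge only weakly (think of feedbacks oscillating on ever finer scales). So they are not precompact in $L^{d+1}$, and no approximation uniform in $n$ exists. Aronson-type bounds are not uniform either, since the penalization drift is unbounded in $n$.

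As written, your Step 2 produces a relaxed solution, not a Markovian one. You would need an extra argument: for example, that the limit law is Markov with characteristics that are functions of $(t,\hat X_t)$, or a Markovianization performed after the limit.

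\textbf{What the paper does.} The paper takes the second option, which also makes your first step unnecessary. It starts from the relaxed reflected solution of Theorem~\ref{MFGR}, freezes $\mu$, and applies Lemma~\ref{lemma:adj-Markovian}. This yields a Markovian $\hat P\in\mathcal{R}_{\mathrm{ref}(D)}(\mu)$ with the same cost and the same one-dimensional marginals. Since optimality and consistency depend only on the cost and the time marginals, $\hat P$ is again an equilibrium. The lemma is a single-agent, fixed-$\mu$ statement. Its proof penalizes with the control held fixed, mimics at each $n$, and passes to the limit using stability results for penalized SDEs with measurable, uniformly elliptic coefficients.

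Note, however, that the kernel there is defined by $q(t,X^n_t)=\mathbb{E}^{\mathbb{Q}}(Q_t\mid X^n_t)$, so it also depends on $n$, while those stability results concern a fixed coefficient. The identification problem you ran into is therefore the real crux under either organization, and the paper leaves it implicit.

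\textbf{Third step.} Your third step matches the paper's (Lacker's) selection argument, with one correction. Do not appeal to uniqueness in law for the measurable-coefficient reflected SDE, which is not available. Instead keep the same $(\hat X,\hat K)$ and replace the control by $\mathrm{d}t\,\delta_{\alpha(t,\hat X_t)}(\mathrm{d}u)$. This preserves the martingale problem because the integrated drift and diffusion coefficients coincide.
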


The proof of Theorem~\ref{theorem:Markovian} relies on the following auxiliary result.

\begin{lemma}\label{lemma:adj-Markovian}
Assume that Assumptions~\ref{ass:A} and~\ref{ass:M} are satisfied. 
Let $\mu \in \mathcal{P}_2(\mathcal{C}^d)$ and $P \in \mathcal{R}_{\mathrm{ref}(D)}(\mu)$. 
Then there exist a measurable mapping 
\begin{equation*}
q : [0, T] \times \mathbb{R}^d \longrightarrow \mathcal{P}(U)
\end{equation*}
and a probability measure $\hat{P} \in \mathcal{R}_{\mathrm{ref}(D)}(\mu)$ such that:
\begin{enumerate}[label=(\roman*)]
    \item $\hat{P}\big(Q(\mathrm{d}t, \mathrm{d}u) = \mathrm{d}t\, q(t,X_t)(\mathrm{d}u)\big) = 1$;
    \item $J(\mu, \hat{P}) = J(\mu, P)$;
    \item $\hat{P} \circ X_t^{-1} = P \circ X_t^{-1}$ for every $t \in [0, T]$.
\end{enumerate}
\end{lemma}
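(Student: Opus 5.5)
We plan to combine the penalization scheme of Section~\ref{section-proof} with the mimicking theorem for Itô processes driven by relaxed controls; the mimicking step is carried out at the penalized level, where no local time is present.

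\textbf{Step 1: penalize the given control.} Take $P\in\mathcal{R}_{\mathrm{ref}(D)}(\mu)$. By Proposition~\ref{rep martingale for Reflected dynamics}, realize it on $(\Omega',\mathcal{F}'_t,\mathbb{Q}')$ with $(X',K',Q',M')$. As in the proof of Lemma~\ref{adjointlemma}, consider the penalized processes $X'^n$. One choice is to keep $\mu$ fixed in the coefficients, i.e.\ drift $b(t,x,\mu_t,u)-n(x-\pi_{\bar D}(x))$. Then $(X'^n,K'^n)\to(X',K')$ in law in the uniform topology, with the moment bounds of Lemma~\ref{lemma1}.

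\textbf{Step 2: mimic at level $n$.} For each $n$, apply the mimicking theorem for relaxed controls (Lacker--Shkolnikov--Zhang type, or Brunick--Shreve in the relaxed form). Define the measurable kernel
\begin{equation*}
q^n(t,x):=\mathrm{Law}_{\mathbb{Q}'}\bigl(Q'_t \,\big|\, X'^n_t=x\bigr)\ \text{(averaged as a measure on $U$)}.
\end{equation*}
This yields a weak solution $\hat X^n$ of the Markovian penalized SDE with drift $\int_U b\,q^n(t,x)(du)-n(x-\pi_{\bar D}(x))$ and diffusion matrix $\int_U\sigma\sigma^\top q^n(t,x)(du)$, and it satisfies $\mathrm{Law}(\hat X^n_t)=\mathrm{Law}(X'^n_t)$ for all $t$. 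Because the penalized running cost $f+nh\,(x-\pi_{\bar D}(x))$ and $g$ depend on $(t,X_t)$ and $Q_t$ only through time marginals and conditional expectations, the tower property gives equality of the penalized costs: $J^n(\mu,\hat P^n)=J^n(\mu,P^n_1)$. Here Assumption~\ref{ass:M} makes the averaged diffusion matrix uniformly elliptic, so a square root of it can be chosen measurably.

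\textbf{Step 3: pass to the limit $n\to\infty$ (main obstacle).} The coefficients $x\mapsto\int_U b\,q^n(t,x)(du)$ are only measurable, so the continuity argument of Step~1 of Section~\ref{section-proof} fails. Tightness still holds: $(\hat X^n,\hat K^n,Q^n)$ with $Q^n=dt\,q^n(t,\hat X^n_t)$ are tight by Lemmas~\ref{lemma1}--\ref{lemma3}. To identify limits we would use Krylov-type estimates, valid under uniform ellipticity (as in \cite{mouchtabih,Slominski-measurable}):
\begin{equation*}
\mathbb{E}\int_0^T|\varphi(t,\hat X^n_t)|\,dt\le C\|\varphi\|_{L^{d+1}},
\end{equation*}
uniformly in $n$. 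Such estimates allow continuous approximations of the measurable functions.

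\textbf{Step 4: obtain a Markovian limit kernel.} We would extract a limit kernel $q$ so that the limit control is Markovian. The route we would try is to take the limit relaxed law $\hat P$ on $\Omega[\bar D]$ first. Then define $q(t,x):=\mathbb{E}^{\hat P}[Q_t\mid X_t=x]$, and apply the reflected mimicking principle. The key point is that the Markovian projection of the limit equals the limit of the Markovian projections, which follows from the Krylov estimates and weak convergence of the pairs $(\hat X^n_t,Q^n_t)$ in $\mathcal{P}(\mathbb{R}^d\times\mathcal{P}(U))$.

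\textbf{Step 5: conclude.} We would check the identities in the limit: (iii) follows because the marginals agree at level $n$ and converge; (ii) follows from equality of the penalized costs and the cost convergence argument \eqref{Con-cost1} applied to both sequences; (i) holds by construction.

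I expect Step 3/4, passing Markovian measurable-coefficient dynamics to the limit and identifying the kernel, to be the hardest part. The fallback is to work directly at the reflected level with a reflected mimicking argument, using test functions with $D\phi\cdot\eta\le0$ to control the boundary term.
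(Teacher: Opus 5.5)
Your architecture matches the paper's. You represent $P$, penalize with $\mu$ frozen, and mimic at the penalized level, where there is no singular $\mathrm{d}K$ term. You then use the tower property to preserve marginals and penalized costs, and let $n\to\infty$ under Assumption~\ref{ass:M}. Steps 1--3 and your derivation of (ii)--(iii) are fine. In fact, since $b,\sigma,f$ are continuous in $(x,u)$ and $\mathcal{U}$ is compact, the relaxed-limit argument of Step~1 in Section~\ref{section-proof} should already show that any limit point $\hat P$ of the laws of $\bigl(\hat X^n,\hat K^n,\mathrm{d}t\,q^n(t,\hat X^n_t)(\mathrm{d}u)\bigr)$ lies in $\mathcal{R}_{\mathrm{ref}(D)}(\mu)$ and satisfies (ii) and (iii). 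The measurable kernels are simply absorbed into the control. Everything therefore hinges on (i), and your Step~4 does not establish it.

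Your first route (pass to the limit, set $q(t,x)=\mathbb{E}^{\hat P}[Q_t\mid X_t=x]$, then apply a ``reflected mimicking principle'') is circular. If such a principle were available, you would apply it to $P$ directly and Steps 1--3 would be unnecessary. Its absence, because $\mathrm{d}K$ is not of the form $\beta(t,X_t)\,\mathrm{d}t$, is precisely why one penalizes.

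Your second route (``the Markovian projection of the limit equals the limit of the Markovian projections'') is asserted rather than proved, and the tools you cite do not give it. Krylov's estimate converts \emph{strong} $L^{d+1}$ convergence of integrands into convergence along the paths $\hat X^n$. The kernels $q^n$ have no reason to converge strongly; after extraction they converge at best weakly-$*$, and Krylov's bound says nothing about that. Weak convergence of pairs does not preserve graph structure either. For $Y$ with a density and $\phi_n(y)=\mathrm{sgn}\sin(ny_1)$, the pair $(Y,\phi_n(Y))$ converges in law to a pair whose second component is independent of the first. Moreover, $Q_t$ at fixed $t$ is not a continuous functional on $\mathcal{U}$, so convergence in $\mathcal{U}$ does not even yield convergence of the time-$t$ pairs. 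Proving that the limit of $\mathrm{d}t\,q^n(t,\hat X^n_t)(\mathrm{d}u)$ has the form $\mathrm{d}t\,q(t,\hat X_t)(\mathrm{d}u)$ needs a genuinely new argument, and that argument is missing. One example would be uniform-in-$n$ transition-density or occupation-measure estimates showing that the diffusion averages out the oscillations of $q^n$.

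For comparison, the paper obtains (i) from S{\l}omi\'nski's results on penalization for SDEs with merely measurable coefficients under uniform ellipticity (Theorems~2.1 and~3.1 of \cite{Slominski-measurable}). The Markovian penalized equation with coefficients $b^1(t,x)=\int_U b(t,x,\mu_t,u)\,q(t,x)(\mathrm{d}u)$ and $\bar{\sigma}^1(t,x)=\bar{\sigma}(t,x,\mu_t,q(t,x))$ converges to the reflected SDE with the \emph{same} coefficients. The limit is therefore Markovian with kernel $q$ by construction. Krylov's estimate is used only to pass to the limit in the cost term with the measurable integrand $x\mapsto\int_U f(t,x,\mu_t,u)\,q(t,x)(\mathrm{d}u)$.

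That mechanism needs a single kernel for all $n$. The paper writes $q$ without an index, even though its defining relation $q(t,X^n_t)=\mathbb{E}^{\mathbb{Q}}(Q_t\mid X^n_t)$ makes it depend on $n$. So you have correctly identified the delicate point, which the paper's write-up passes over. Your proposal does not resolve it, however, and as it stands assertion (i) is unproved.
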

\begin{proof} The proof follows by adapting the arguments of Corollary~3.8 in Lacker~\cite{laker} to our setting. By Theorem~2.5 of Karoui~\cite{Mel}, there exists a measurable function 
\begin{equation*}
\bar{\sigma} : [0,T] \times \mathbb{R}^d \times \mathcal{P}(\mathbb{R}^d) \times \mathcal{P}(U) \to \mathbb{R}^{d \times m}, 
\end{equation*}
which is continuous in $(x,\mu,q)$ for each $t$, and satisfies 
\begin{equation*} 
\bar{\sigma} \bar{\sigma}^{\top}(t, x, \mu, q) = \int_{U} q(\mathrm{d}u)\, \sigma\sigma^{\top}(t, x, \mu, u), \quad\text{and}\quad \bar{\sigma} \bar{\sigma}^{\top}(t, x, \mu, \delta_{u}) = \sigma\sigma^{\top}(t, x, \mu, u), 
\end{equation*} 
for every $(t, x, \mu, u) \in [0,T] \times \mathbb{R}^d \times \mathcal{P}(\mathbb{R}^d) \times U$.\\
Moreover, we may construct a filtered probability space $(\Omega, \mathcal{F}_t, \mathbb{Q})$ supporting a $d$-dimensional $\mathcal{F}_t$-adapted process $(X, K)$ and a Wiener process $W$, such that
\begin{equation*} 
\mathbb{Q} \circ (X, K, Q)^{-1} = P 
\end{equation*}
and $(X,K)$ satisfies the reflected SDE \begin{equation} \label{Penalized-No-mimicking}  \displaystyle \mathrm{d}X_t+ \mathrm{d}K_t = \int_{U} b(t, X_t, \mu_t, u)\, Q_t(\mathrm{d}u)\, \mathrm{d}t + \bar{\sigma}(t, X_t, \mu_t, Q_t)\, \mathrm{d}W_t .
\end{equation}
For $n \in \mathbb{N}$, consider the penalized SDE 
\begin{equation} \label{Penalized-SDE} \mathrm{d}X^n_t = \int_{U} \big( b(t, X^n_t, \mu_t, u)\, Q_t(\mathrm{d}u) - n \big( X^n_t - \pi_{\bar{D}}(X^n_t) \big) \big) \mathrm{d}t + \bar{\sigma}(t, X^n_t, \mu_t, Q_t) \, \mathrm{d}W_t. 
\end{equation} 
 Since $x \mapsto x - \pi_{\bar{D}}(x)$ is Lipschitz, \eqref{Penalized-SDE} admits a unique strong solution $X^n$. Define \begin{equation*}
K^n_s := \int_{0}^{s} n \big( X^n_t - \pi_{\bar{D}}(X^n_t) \big) \, \mathrm{d}t.
\end{equation*}
It follows that $(X^n, K^n, Q)$ satisfies a penalized version of \eqref{Penalized-No-mimicking}, and the following convergence holds (by arguments similar to those in Subsection~\ref{subsection 4.2}) \begin{equation}\label{conv1}
P^n := \mathbb{Q} \circ (X^n, K^n, Q)^{-1} \; \longrightarrow \; P \quad \text{in } \mathcal{P}_{2}(\Omega). \end{equation} Analogously to Corollary~3.8 in ~\cite{laker}, there exists a measurable function 
\begin{equation*} 
q : [0, T] \times \mathbb{R}^d \longrightarrow \mathcal{P}(U) 
\end{equation*}
such that 
\begin{equation*}
q(t, X^n_t) = \mathbb{E}^{\mathbb{Q}}\left(\, Q_t \,\middle|\, X^n_t \right), \quad \mathrm{d}t \times \mathrm{d}\mathbb{Q}\text{-a.s.} 
\end{equation*} 
Hence, $\mathrm{d}t \times \mathrm{d}\mathbb{Q}\text{-a.s.},$ we have
\begin{equation*} 
\int_{U} b\big(t, X^n_{t}, \mu_t, u\big) \, q(t, X^n_{t})(\mathrm{d}u) -n \big( X^n_t - \pi_{\bar{D}}(X^n_t) \big)= \mathbb{E}^{\mathbb{Q}}\left( \int_{U} b\big(t, X^n_{t}, \mu_{t}, u\big) \, Q_t(\mathrm{d}u) -n \big( X^n_t - \pi_{\bar{D}}(X^n_t) \big)\ \middle|\ X^n_{t} \right),
\end{equation*} 
and 
\begin{align*} \bar{\sigma} \bar{\sigma}^{\top}(t, X^n_t, \mu_t, q(t, X^n_t))&= \int_{U} q(t, X^n_t)(\mathrm{d}u)\, \sigma\sigma^{\top}(t, X^n_t, \mu_t, u)\\ &= \mathbb{E}^{\mathbb{Q}}\left( \int_{U} Q_t(\mathrm{d}u) \sigma\sigma^{\top}(t, X^n_t, \mu_t, u) \,\ \middle|\ X^n_{t} \right)\\ &=\mathbb{E}^{\mathbb{Q}}\left(\bar{\sigma}\bar{\sigma}^{\top}(t, X^n_t, \mu_t, Q_t) \,\ \middle|\ X^n_{t} \right).
\end{align*} 
By the mimicking theorem, there exists another filtered probability space $(\hat{\Omega}^n, \hat{\mathcal{F}}^n_t, \hat{\mathbb{Q}}^n)$ supporting a $\hat{\mathcal{F}}^n_t$-Wiener process $\hat{W^n}$ and an $\mathbb{R}^d$-valued $\hat{\mathcal{F}}^n_t$-adapted process $\hat{X}^n$ such that 
\begin{equation} \label{penalized-mimicking} \mathrm{d}\hat{X}^n_t = \int_{U} b\big(t, \hat{X}^n_t, \mu_t, u\big)\, q(t, \hat{X}^n_t)(\mathrm{d}u)\, \mathrm{d}t -n \big( \hat{X}^n_t - \pi_{\bar{D}}(\hat{X}^n_t) \big)\mathrm{d}t + \bar{\sigma}\big(t, \hat{X}^n_t, \mu_t, q(t, \hat{X}^n_t)\big)\, \mathrm{d}\hat{W}^n_t, 
\end{equation} 
and 
\begin{equation} \label{samelaw1} \hat{\mathbb{Q}}^n \circ (\hat{X}^n_t)^{-1} = \mathbb{Q} \circ (X^n_t)^{-1}, \quad \forall\, t \in [0, T].
\end{equation} 
The equation \eqref{penalized-mimicking}, can rewrite as 
\begin{equation} 
\mathrm{d}\hat{X}^n_t+ \mathrm{d}\hat{K}^n_t = b^1\big(t, \hat{X}^n_t\big) \mathrm{d}t + \bar{\sigma}^1\big(t, \hat{X}^n_t\big)\, \mathrm{d}\hat{W}^n_t ,
\end{equation}
where 
\begin{equation*} b^1\big(t, x\big):=\int_{U} b\big(t, x, \mu_t, u\big)\, q(t, x)(\mathrm{d}u), \quad\bar{\sigma}^1\big(t, x\big):=\bar{\sigma}\big(t, x, \mu_t, q(t, x)\big) 
\end{equation*}
and 
$$\hat{K}^n_s:= \displaystyle\int_{0}^{s} n \big( \hat{X}^n_t - \pi_{\bar{D}}(\hat{X}^n_t) \big)\mathrm{d}t.$$
Similarly to Lemma~\ref{lemma1}, one can show that for every $q \ge 1$ such that $2q \in [2, q']$, there exists a constant $C > 0$ such that 
\begin{equation}\label{Dom-measurable} 
\sup_{n} \mathbb{E}^{\mathbb{Q}^n} \left( \sup_{0 \le t \le T} |\hat{X}^n_t|^{2q} + \sup_{0 \le t \le T} |\hat{K}^n_t|^{2q} + |\hat{K}^n|_{[0,T]}^{q} \right) \le C. 
\end{equation}
Note that the coefficients $(b^{1}, \bar{\sigma}^{1})$ are merely measurable, possibly discontinuous, and satisfy condition 1.3 in \cite{Slominski-measurable}. By Theorem 2.1 of \cite{Slominski-measurable}, the sequence $(\hat{X}^{n}, \hat{K}^{n})$ is therefore tight in $\mathcal{C}([0,T], \mathbb{R}^{d})$. The coefficient $\bar{\sigma}^{1}$ also satisfies the uniform ellipticity condition by assumption \ref{ass:M}. Using the arguments from the proof of Theorem 3.1 in \cite{Slominski-measurable}, we obtain 
\begin{equation}\label{convergence-markovian-penalization}
(\hat{X}^{n}, \hat{K}^{n}) \xrightarrow{d} (\hat{X}, \hat{K}), \quad \text{in distribution with respect to the uniform topology}
\end{equation}
where $(\hat{\Omega},\hat{\mathcal{F}},\hat{\mathbb{F}},\hat{\mathbb{Q}},\hat{W},\hat{X},\hat{K})$ denotes a weak solution to the reflected SDE
\begin{equation}\label{eq.Markovian1}
\mathrm{d}\hat{X}_t+ \mathrm{d}\hat{K}_t
= b^{1}(t,\hat{X}_t)\,\mathrm{d}t
+ \bar{\sigma}^{1}(t,\hat{X}_t)\,\mathrm{d}\hat{W}_t
 .
\end{equation} 
By Itô’s formula, we have 
\begin{equation}\label{first-prop}
\hat{P}
:= \hat{\mathbb{Q}} \circ \big(\hat{X}, \hat{K},
\hat{q}(t,\hat{X}_t)(\mathrm{d}u)\,\mathrm{d}t\big)^{-1}
\in \mathcal{R}_{ref(D)}(\mu).
\end{equation}  
Combining \eqref{convergence-markovian-penalization}, \eqref{samelaw1}, and \eqref{conv1}, we obtain, for every $t \in [0,T]$,
\begin{align*}
\hat{P}\circ (X_t)^{-1}
&= \hat{\mathbb{Q}}\circ (\hat{X}_t)^{-1}
= \lim_{n\to\infty} \mathbb{Q}^n \circ (\hat{X}^n_t)^{-1} \\
&= \lim_{n\to\infty} \mathbb{Q} \circ (X^n_t)^{-1}
= \mathbb{Q} \circ (X_t)^{-1}
= P \circ (X_t)^{-1}.
\end{align*}
Hence, assertion~(iii) follows.

To prove assertion~(ii), we invoke Skorokhod’s representation theorem. There exists a probability space
$(\tilde{\Omega},\tilde{\mathcal{F}},\tilde{\mathbb{Q}})$ and stochastic processes
$(\tilde{X}^n,\tilde{K}^n)$ and $(\tilde{X},\tilde{K})$ defined on it such that
\begin{equation}\label{samelaw2}
Law(\tilde{X}^n,\tilde{K}^n)=Law(\hat{X}^n,\hat{K}^n),
\qquad
Law(\tilde{X},\tilde{K})=Law(\hat{X},\hat{K}),
\end{equation}
and
\begin{equation}\label{conv2}
(\tilde{X}^n,\tilde{K}^n)\longrightarrow (\tilde{X},\tilde{K})
\quad \tilde{\mathbb{Q}}\text{-a.s.}
\end{equation}
Consequently,
\begin{align*}
J(\mu,\hat{P})
&= \mathbb{E}^{\hat{\mathbb{Q}}}\Bigg(
g(\hat{X}_T,\mu_T)
+ \int_0^T \mathrm{d}t \int_U q(t,\hat{X}_t)(\mathrm{d}u)\,
f(t,\hat{X}_t,\mu_t,u)
+ \int_0^T h(t,\hat{X}_t,\mu_t)\,\mathrm{d}\hat{K}_t
\Bigg)\\
&=\mathbb{E}^{\tilde{\mathbb{Q}}}\Bigg(
g(\tilde{X}_T,\mu_T)
+ \int_0^T \mathrm{d}t \int_U q(t,\tilde{X}_t)(\mathrm{d}u)\,
f(t,\tilde{X}_t,\mu_t,u)
+ \int_0^T h(t,\tilde{X}_t,\mu_t)\,\mathrm{d}\tilde{K}_t
\Bigg)  \\
&= \lim_{n\to\infty}
\mathbb{E}^{\tilde{\mathbb{Q}}}\Bigg(
g(\tilde{X}^n_T,\mu_T)
+ \int_0^T \mathrm{d}t \int_U q(t,\tilde{X}^n_t)(\mathrm{d}u)\,
f(t,\tilde{X}^n_t,\mu_t,u)
+ \int_0^T h(t,\tilde{X}^n_t,\mu_t)\,\mathrm{d}\tilde{K}^n_t
\Bigg) \\
&= \lim_{n\to\infty}
\mathbb{E}^{\mathbb{Q}^n}\Bigg(
g(\hat{X}^n_T,\mu_T)
+ \int_0^T \mathrm{d}t \int_U q(t,\hat{X}^n_t)(\mathrm{d}u)\,
f(t,\hat{X}^n_t,\mu_t,u)
+ \int_0^T h(t,\hat{X}^n_t,\mu_t)\,\mathrm{d}\hat{K}^n_t
\Bigg) \\
&= \lim_{n\to\infty}
\mathbb{E}^{\mathbb{Q}}\Bigg(
g(X^n_T,\mu_T)
+ \int_0^T \mathrm{d}t \int_U q(t,X^n_t)(\mathrm{d}u)\,
f(t,X^n_t,\mu_t,u)
+ \int_0^T h(t,X^n_t,\mu_t)\,\mathrm{d}K^n_t
\Bigg) \\
&= \lim_{n\to\infty}
\mathbb{E}^{\mathbb{Q}}\Bigg(
g(X^n_T,\mu_T)
+ \int_0^T \mathrm{d}t \int_U Q_t(\mathrm{d}u)\,
f(t,X^n_t,\mu_t,u)
+ \int_0^T h(t,X^n_t,\mu_t)\,\mathrm{d}K^n_t
\Bigg) \\
&= \mathbb{E}^{\mathbb{Q}}\Bigg(
g(X_T,\mu_T)
+ \int_0^T \mathrm{d}t \int_U Q_t(\mathrm{d}u)\,
f(t,X_t,\mu_t,u)
+ \int_0^T h(t,X_t,\mu_t)\,\mathrm{d}K_t
\Bigg) \\
&= J(\mu,P).
\end{align*}
For the first convergence, observe that the mapping $x \longmapsto \int_U f(t,x,\mu_t,u)\,q(t,x)(\mathrm{d}u)$ is only measurable. 
By invoking Krylov’s estimate, and arguing as in the proof of Lemma~2.5 in \cite{Slominski-convergence}, 
together with \eqref{Dom-measurable}, we can apply the dominated convergence theorem. 
The fourth equality follows from \eqref{samelaw2}, the fifth from \eqref{samelaw1}, 
and the final convergence is obtained by \eqref{conv1}.
\end{proof}
\begin{proof}[Proof of Theorem \ref{theorem:Markovian}]
Let $P \in \mathcal{R}_{ref(D)}^*(\mu)$, where $\mu = P \circ X^{-1}$, be a relaxed MFG solution given by Theorem \ref{MFGR}. Let $\hat{P}$ be the probability measure constructed in Lemma \ref{lemma:adj-Markovian}. The properties listed in Lemma \ref{lemma:adj-Markovian} allow us to build from $P$ a new relaxed MFG solution that is Markovian.\\
Point \textbf{ii)} shows that $\hat{P}$ also minimizes the expected cost among all admissible laws, then $\hat{P} \in \mathcal{R}_{ref(D)}^*(\mu)$.  
Point \textbf{iii)} guarantees that the marginals of $X$ are preserved under $\hat{P}$, meaning $\mu_t = \hat{P} \circ X_t^{-1}$ for every $t$. 
Point \textbf{i)} ensures that $\hat{P}$ is Markovian.
Thus $\hat{P}$ is a relaxed Markovian MFG solution.

Under Assumption \ref{ass:C}, the existence of a strict Markovian MFG solution follows from the same line of reasoning as in the proof of Theorem 3.7 in \cite{laker}. In particular, by applying the argument from the second part of that proof, we obtain measurable functions
$$
\alpha : [0,T] \times \mathbb{R}^d \to U,
\qquad
v : [0,T] \times \mathbb{R}^d \to \mathbb{R}^+,
$$
such that, for every t and almost every $\omega$,
$$
\int_{U} q(t,\hat{X}_t(\omega))(\mathrm{d}u)\,
\bigl(b, \sigma\sigma^{\top}, f\bigr)
\bigl(t,\hat{X}_t(\omega),\mu_t,u\bigr)
=
\bigl(b, \sigma\sigma^{\top}, f\bigr)
\bigl(t,\hat{X}_t(\omega),\mu_t,\alpha(t,\hat{X}_t(\omega))\bigr)
+
\bigl(0,0,v(t,\hat{X}_t(\omega))\bigr).
$$
This identity shows that the relaxed control can be replaced by the strict control $u = \alpha(t,\hat{X}_t)$, up to a nonnegative adjustment that affects only the running cost. As a result, the probability measure
$$
\hat{\mathbb{P}} := \hat{P} \circ \bigl(\hat{X},\, \hat{K},\, \mathrm{d}t\,\delta_{\alpha(t,\hat{X}_t)}(\mathrm{d}u)\bigr)^{-1}
$$
defines a strict Markovian MFG solution.
\end{proof}
\section{Approximation of Relaxed MFGs with Reflected Dynamics by Strict Controls with Penalized Dynamics} \label{sec-appro}
In this section, we show that relaxed MFG solutions with reflection, without assuming Assumption~\ref{ass:C}, can be approximated by strict controls whose dynamics are governed by penalized SDEs. 
The analysis relies on two main ingredients: the chattering lemma, which links relaxed and strict controls, and a penalization method that replaces reflected SDEs with standard SDEs augmented by a penalization term enforcing the boundary constraint.

Let’s begin with the following lemma, which includes two results: the first is the chattering lemma, and the second is based on the stability theorem for martingale measures. Both results are proved in \cite[p.~196]{meleard}.
\begin{lemma}\label{chattering lemma}
Let $q$ be a relaxed control, $M$ a martingale measure on $[0, T] \times U$ with intensity $q_t(\mathrm{d}u)\,\mathrm{d}t$, and $B$ a Brownian motion. All processes are defined on a filtered probability space $(\Omega', \mathcal{F}', \mathbb{F}', \mathbb{P}')$.
\begin{description}
    \item[i)] There exists a sequence of $\mathbb{F}'$-adapted processes $\alpha^n$ taking values in $U$, such that the sequence of random measures $\delta_{\alpha^n_t}(\mathrm{d}u)\, \mathrm{d}t$ converges to $q_t(\mathrm{d}u)\,\mathrm{d}t$ in $\mathcal{U}$, $\mathbb{P}'$-almost surely.
    \item[ii)] For any continuous and bounded function $\phi: [0, T] \times U \to \mathbb{R}$,
    \begin{equation*}
        \lim_{n \to \infty} \mathbb{E}^{\mathbb{P}'}\left( \left| \int_0^T \phi(s, \alpha^n_s)\, \mathrm{d}B_s - \int_0^T \int_U \phi(s, u)\,M(\mathrm{d}s, \mathrm{d}u) \right| \right) = 0.
    \end{equation*}
\end{description}
\end{lemma}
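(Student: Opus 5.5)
The plan is to prove i) by an explicit chattering construction and ii) by an $L^2$ (Itô isometry) computation that reduces everything to one cross-variation term. Since $U$ is compact, $\mathbb{E}^{\mathbb{P}'}|\cdot|\le(\mathbb{E}^{\mathbb{P}'}|\cdot|^2)^{1/2}$ turns ii) into an $L^2$ statement. As worded, ii) is only possible if the fixed $B$ and $M$ are jointly structured (the situation of \cite[p.~196]{meleard}), so the plan locates exactly where that structure is used.

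For i), I would follow the classical chattering construction.

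\begin{itemize}
\item Fix $n$, the dyadic grid $t_k=kT2^{-n}$, and a finite Borel partition $U=\bigcup_{j=1}^{J_n}U^n_j$ with mesh at most $2^{-n}$ and points $u^n_j\in U^n_j$.
\item On $[t_k,t_{k+1})$, use the $\mathcal{F}'_{t_k}$-measurable weights
\begin{equation*}
w^n_{k,j}:=\frac{1}{T2^{-n}}\int_{t_{k-1}}^{t_k}q_s(U^n_j)\,\mathrm{d}s ,
\end{equation*}
lagged by one interval so that the resulting process is adapted.
\item Split $[t_k,t_{k+1})$ into consecutive subintervals of lengths $w^n_{k,j}T2^{-n}$, and set $\alpha^n_t:=u^n_j$ on the $j$-th subinterval.
\end{itemize}

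Then, for every $\phi\in\mathcal{C}([0,T]\times U)$, the difference $\int_0^T\phi(s,\alpha^n_s)\,\mathrm{d}s-\int_0^T\!\int_U\phi(s,u)\,q_s(\mathrm{d}u)\,\mathrm{d}s$ is bounded by three terms:
\begin{itemize}
\item the modulus of continuity of $\phi$ at scale $2^{-n}$;
\item a one-step time-shift error $\sum_k\big|\int_{t_{k-1}}^{t_k}-\int_{t_k}^{t_{k+1}}\big|$, which tends to $0$ by $L^1$-continuity of translations applied to $s\mapsto\int_U\phi(s,u)\,q_s(\mathrm{d}u)$;
\item a boundary term of order $2^{-n}$.
\end{itemize}
Applying this to a countable convergence-determining family in $\mathcal{C}([0,T]\times U)$ gives $\delta_{\alpha^n_t}(\mathrm{d}u)\,\mathrm{d}t\to q_t(\mathrm{d}u)\,\mathrm{d}t$ in $(\mathcal{U},d_{\mathcal{U}})$, $\mathbb{P}'$-a.s.

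For ii), expanding the square and using the Itô isometry for $B$ and for $M$ gives
\begin{align*}
\mathbb{E}^{\mathbb{P}'}\Big|\int_0^T\phi(s,\alpha^n_s)\,\mathrm{d}B_s-\int_0^T\!\!\int_U\phi\,\mathrm{d}M\Big|^2
&=\mathbb{E}^{\mathbb{P}'}\!\int_0^T\phi^2(s,\alpha^n_s)\,\mathrm{d}s+\mathbb{E}^{\mathbb{P}'}\!\int_0^T\!\!\int_U\phi^2(s,u)\,q_s(\mathrm{d}u)\,\mathrm{d}s\\
&\quad-2\,\mathbb{E}^{\mathbb{P}'}\Big\langle\int_0^\cdot\phi(s,\alpha^n_s)\,\mathrm{d}B_s,\int_0^\cdot\!\!\int_U\phi\,\mathrm{d}M\Big\rangle_T .
\end{align*}
By i) applied to $\phi^2$ and bounded convergence, the first term tends to $\mathbb{E}^{\mathbb{P}'}\int_0^T\!\int_U\phi^2\,q_s(\mathrm{d}u)\,\mathrm{d}s$. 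It therefore suffices to prove that the cross term converges to the same quantity, which makes the whole expression tend to $0$.

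The main obstacle is this cross term, and it is exactly where the joint structure of $(B,M)$ enters. The assumption I would invoke is the one under which \cite[p.~196]{meleard} states the result: the pair $(B,M)$ is coupled so that
\begin{equation*}
\mathrm{d}\langle B,M(A\times\cdot)\rangle_s=\sum_j\mathbf{1}_{\{\alpha^n_s=u^n_j\}}\,\mathbf{1}_{A}(u^n_j)\,\mathrm{d}s+o(1)
\end{equation*}
along the chattering sequence. Concretely, on each chattering subinterval $B$ carries the $M$-mass of the cell $U^n_j$, so $M$ is realized as the $L^2$-limit of the orthogonal martingale measures $\sum_j\delta_{u^n_j}(\mathrm{d}u)\,\mathbf{1}_{\{\alpha^n_s=u^n_j\}}\,\mathrm{d}B_s$.

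Under this coupling the cross term equals $\mathbb{E}^{\mathbb{P}'}\int_0^T\phi(s,\alpha^n_s)\,\phi^n(s,\alpha^n_s)\,\mathrm{d}s$, where $\phi^n$ is the cell-wise projection of $\phi$ onto the partition $(U^n_j)_j$. By uniform continuity of $\phi$ on the compact set $[0,T]\times U$ and by i), this converges to $\mathbb{E}^{\mathbb{P}'}\int_0^T\!\int_U\phi^2\,q_s(\mathrm{d}u)\,\mathrm{d}s$, which closes the argument.

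My first attempt at verifying the coupling would be a Cauchy argument in $L^2$ for the approximating measures $M^n:=\sum_j\delta_{u^n_j}(\mathrm{d}u)\,\mathbf{1}_{\{\alpha^n_s=u^n_j\}}\,\mathrm{d}B_s$, using the nestedness of the dyadic partitions. I expect this step to be the delicate one, since for an arbitrary $B$ independent of $M$ the cross term is $0$ and ii) fails.
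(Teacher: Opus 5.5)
Part i) of your plan is fine. It is the standard chattering construction, and lagging the weights by one grid interval is what makes $\alpha^n$ adapted. The paper gives no argument of its own for either part; it only refers to M\'el\'eard.

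The genuine gap is in ii), and a better choice of coupling cannot close it. Unless $q$ is already a strict control, \emph{no} single Brownian motion $B$ satisfies ii) along a sequence $\alpha^n$ obeying i). This holds whether $B$ is given, independent of $M$, or built jointly with $M$ on an extension. Set $Y_n:=\int_0^T\phi(s,\alpha^n_s)\,\mathrm{d}B_s$, $Z:=\int_0^T\!\int_U\phi(s,u)\,M(\mathrm{d}s,\mathrm{d}u)$ and $\bar\phi_s:=\int_U\phi(s,u)\,q_s(\mathrm{d}u)$.

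Since $\phi$ is bounded, BDG bounds $\mathbb{E}^{\mathbb{P}'}|Y_n-Z|^4$ uniformly in $n$, so the $L^1$-convergence in ii) is equivalent to $L^2$-convergence. Each $Y_n$ lies in the closed subspace $\{\int_0^T H_s\,\mathrm{d}B_s\}$ of $L^2(\mathbb{P}')$. Hence $Z$ would also have the form $Z=\int_0^T H_s\,\mathrm{d}B_s$, with $\mathbb{E}^{\mathbb{P}'}\int_0^T H_s^2\,\mathrm{d}s=\mathbb{E}^{\mathbb{P}'}|Z|^2$. Use i) in its stable form, namely $\int_0^T\phi(s,\alpha^n_s)g(s)\,\mathrm{d}s\to\int_0^T\bar\phi_s g(s)\,\mathrm{d}s$ for $g\in L^1[0,T]$, together with dominated convergence:
\begin{equation*}
\mathbb{E}^{\mathbb{P}'}|Z|^2=\lim_{n\to\infty}\mathbb{E}^{\mathbb{P}'}(Y_nZ)=\lim_{n\to\infty}\mathbb{E}^{\mathbb{P}'}\int_0^T\phi(s,\alpha^n_s)H_s\,\mathrm{d}s=\mathbb{E}^{\mathbb{P}'}\int_0^T\bar\phi_sH_s\,\mathrm{d}s\le\Big(\mathbb{E}^{\mathbb{P}'}\int_0^T\bar\phi_s^2\,\mathrm{d}s\Big)^{1/2}\big(\mathbb{E}^{\mathbb{P}'}|Z|^2\big)^{1/2}.
\end{equation*}
Therefore $\mathbb{E}^{\mathbb{P}'}\int_0^T\!\int_U\phi^2(s,u)\,q_s(\mathrm{d}u)\,\mathrm{d}s=\mathbb{E}^{\mathbb{P}'}|Z|^2\le\mathbb{E}^{\mathbb{P}'}\int_0^T\bar\phi_s^2\,\mathrm{d}s$. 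Jensen's inequality then forces $\phi(s,\cdot)$ to be $q_s$-a.s.\ constant, $\mathrm{d}s\otimes\mathrm{d}\mathbb{P}'$-a.e. Letting $\phi$ run through a separating family, $q_s$ must be a Dirac mass.

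This is exactly where your argument breaks. First, the coupling you postulate cannot exist. For fixed $B$, $\langle B,M(A\times\cdot)\rangle$ is one process that does not depend on $n$. Meanwhile $\mathbf{1}_A(\alpha^n_s)\,\mathrm{d}s$ keeps oscillating and tends to $q_s(A)\,\mathrm{d}s$ only weakly. So the discrepancy is not $o(1)$ in any sense that would let you evaluate the cross term as $\mathbb{E}^{\mathbb{P}'}\int_0^T\phi(s,\alpha^n_s)\phi^n(s,\alpha^n_s)\,\mathrm{d}s$.

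Second, your $L^2$-Cauchy argument must also fail. We have $\mathbb{E}^{\mathbb{P}'}|M^n(\phi)-M^m(\phi)|^2=\mathbb{E}^{\mathbb{P}'}\int_0^T(\phi(s,\alpha^n_s)-\phi(s,\alpha^m_s))^2\,\mathrm{d}s$, and $\phi(\cdot,\alpha^n_\cdot)$ is not Cauchy in $L^2(\mathrm{d}s\otimes\mathrm{d}\mathbb{P}')$. By i) it converges weakly to $\bar\phi$, while its squared norms converge to $\mathbb{E}^{\mathbb{P}'}\int_0^T\!\int_U\phi^2\,q_s(\mathrm{d}u)\,\mathrm{d}s$. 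That limit is strictly larger than $\mathbb{E}^{\mathbb{P}'}\int_0^T\bar\phi_s^2\,\mathrm{d}s$ whenever $\phi(s,\cdot)$ is not $q_s$-a.s.\ constant.

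So what is missing is not a proof step but a correct formulation of ii). One option is a Brownian motion $B^n$ (with its filtration) depending on $n$, constructed from $M$ on an extension. The same obstruction also rules out the almost sure convergence $\delta_{\alpha^n_t}(\mathrm{d}u)\,\mathrm{d}B_t\to M$ in $\mathcal{C}_{S'}$ with one fixed $B$, which is used in Section~\ref{sec-appro}.
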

In this section, we assume that the initial condition is given by a deterministic point $x_0 \in \bar{D}$. 
The main result of this section is stated in the following theorem.
\begin{theorem} \label{stric-Non refle-equi relaxed}
Let $P \in \mathcal{R}^*_{\mathrm{ref}(D)}(\mu)$, where $\mu = P \circ X^{-1}$, be a relaxed MFG solution. 
Then there exists a sequence of strict controls $(P^n)_n \subset \mathcal{R}_n(\mu^n)$, with $\mu^n = P^n \circ X^{-1}$, associated with penalized dynamics, such that
\begin{description}
    \item[1)] $ \lim_{n \to \infty} W_{\Omega,2}(P^n, P) = 0 $,
    \item[2)] $\lim_{n \to \infty} J^n(\mu^n, P^n) =J(\mu, P)$.
\end{description}
\end{theorem}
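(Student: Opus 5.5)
Fix a relaxed MFG solution $P\in\mathcal{R}^*_{\mathrm{ref}(D)}(\mu)$. By Proposition~\ref{rep martingale for Reflected dynamics}, realize $P$ on a filtered space $(\Omega',\mathcal{F}'_t,\mathbb{Q}')$ carrying $(X',K',Q')$ and orthogonal martingale measures $M'$ with intensity $Q'_t(\mathrm{d}u)\,\mathrm{d}t$. Enlarging the space if necessary, also carry a Brownian motion $B$, so that the reflected equation \eqref{rep-eq1} holds with $X'_0=x_0$.

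Apply Lemma~\ref{chattering lemma}(i) to get $U$-valued adapted processes $\alpha^k$ with $\delta_{\alpha^k_t}(\mathrm{d}u)\,\mathrm{d}t\to Q'$ in $\mathcal{U}$ almost surely. For a diagonal index $k=k(n)$, define $X^n$ as the strong solution of the penalized McKean--Vlasov SDE
\begin{equation*}
\mathrm{d}X^n_t=\big(b(t,X^n_t,\mu^n_t,\alpha^{k}_t)-n(X^n_t-\pi_{\bar D}(X^n_t))\big)\mathrm{d}t+\sigma(t,X^n_t,\mu^n_t,\alpha^{k}_t)\,\mathrm{d}B_t,\qquad \mu^n_t=\mathrm{Law}(X^n_t),\qquad X^n_0=x_0.
\end{equation*}
This equation is well posed because $b^n$ and $\sigma$ are Lipschitz in $(x,\mu)$ by (A.3) and Remark~\ref{Penalized-coeff}, so a standard Picard/contraction argument in $\mathcal{P}_2(\mathcal{C}^d)$ applies. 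Set $P^n:=\mathbb{Q}'\circ(X^n,\delta_{\alpha^k_t}(\mathrm{d}u)\,\mathrm{d}t,K^n)^{-1}$. Then $P^n\in\mathcal{R}_n(\mu^n)$ with $\mu^n=P^n\circ X^{-1}$, and $P^n$ is a strict control.

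The key estimate splits the error in two. Let $X^{n,\infty}$ be the penalized process driven by the relaxed pair $(Q',M')$ and the flow $\mu$, exactly as in \eqref{eq:penalXn} with $\mu^n$ replaced by $\mu$.
\begin{itemize}
\item \emph{Penalization error, $X^{n,\infty}$ versus $X'$.} By the arguments of Step~1 and Lemma~\ref{adjointlemma} (Lemma~\ref{lemma1} bounds plus S-topology tightness, identified via uniqueness of the reflected SDE), $(X^{n,\infty},K^{n,\infty})\to(X',K')$ in law uniformly. The $2q$-moment bounds upgrade this to convergence in $\mathcal{P}_2$. Since both processes live on one space and the limit equation is strongly unique, I would try to get convergence in $L^2(\sup)$ via Lions--Sznitman type estimates for convex $D$.
\item \emph{Chattering error, $X^n$ versus $X^{n,\infty}$, at fixed $n$.} Use Itô on $|X^n-X^{n,\infty}|^2$. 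The penalty terms are monotone, since $\langle x-y,(x-\pi(x))-(y-\pi(y))\rangle\ge0$, so they drop out with the right sign. The measure term is bounded using $W_2(\mu^n_t,\mu_t)^2\le\mathbb{E}|X^n_t-X^{n,\infty}_t|^2+W_2(\mathrm{Law}(X^{n,\infty}_t),\mu_t)^2$. What remains is the gap $\int b(\cdot,\alpha^k)\,\mathrm{d}t-\int\!\!\int b\,Q'\,\mathrm{d}t$ and the analogous stochastic-integral gap, both evaluated along $X^{n,\infty}$. These vanish as $k\to\infty$ by Lemma~\ref{chattering lemma}(ii) for the martingale part and by convergence in $\mathcal{U}$ for the drift. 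Because $X^{n,\infty}$ is not deterministic, I would first approximate $\sigma(s,X^{n,\infty}_s,\mu_s,u)$ by functions $\phi(s,u)$ with random coefficients in step form; the stability theorem of \cite{meleard} handles these.
\item \emph{Conclusion for point 1).} Gronwall gives $\mathbb{E}\sup_t|X^n_t-X^{n,\infty}_t|^2\le C_n\,\varepsilon_k$, with $C_n$ independent of $n$ thanks to monotonicity. Choose $k(n)$ with $C_n\varepsilon_{k(n)}\le1/n$. This yields $W_2$-convergence of the laws of $(X^n,K^n)$, and, together with the $\mathcal{U}$-convergence of the controls, point~1).
\end{itemize}

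The main obstacle is $K^n$: its convergence is not controlled by the $X$-estimate, because $n(X-\pi(X))$ amplifies errors by the factor $n$. To handle it, I would write $K^n=Z^n-X^n+x_0$, where $Z^n$ is the drift-plus-martingale part. Then $K^n$ converges whenever $X^n$ and $Z^n$ do, and $Z^n$ is controlled by the chattering estimates above. This gives $\mathbb{E}\sup|K^n-K'|^2\to0$ along the diagonal. The bound $\sup_n\mathbb{E}|K^n|_{[0,T]}^q<\infty$ from Lemma~\ref{lemma1} supplies the uniform integrability needed below.

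For point~2), pass to a Skorokhod representation as in Lemma~\ref{adjointlemma}. Use continuity of $(x,\mu,q)\mapsto\int f\,q\,\mathrm{d}t+g$ from \cite{luciano} and of $(x,k,\mu)\mapsto\int h\,\mathrm{d}k$ from \cite{badr} under uniform convergence. The growth bounds (A.4) with the $2q$-moment bounds give uniform integrability, so $J^n(\mu^n,P^n)\to J(\mu,P)$ follows by dominated convergence.
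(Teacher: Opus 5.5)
Your route differs from the paper's, which never estimates the chattering error quantitatively. The paper uses the same index $n$ for chattering and penalization and proves tightness of the joint tuple $(X,X^n,M,\delta_{\alpha^n_t}(\mathrm{d}u)\,\mathrm{d}B_t,Q,\delta_{\alpha^n_t}(\mathrm{d}u)\,\mathrm{d}t,K,K^n)$. It then takes a Skorokhod representation and notes that the limit laws of $(Q^n,Q)$ and $(M^n,M)$ sit on the diagonal, because the chattering convergence is almost sure on the original space. Both limit processes therefore solve the same reflected equation with the same driving pair, and pathwise uniqueness finishes the argument. Your strong-estimate-plus-diagonal scheme is a legitimate alternative and would give convergence directly on the original space. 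As written, however, its key step does not close.

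Set $\Delta:=X^n-X^{n,\infty}$ and $\delta_n:=\sup_t W_{\mathbb{R}^d,2}(\mathrm{Law}(X^{n,\infty}_t),\mu_t)$. You introduce $\delta_n$ correctly when bounding the measure term, but then drop it. Gronwall actually gives $\mathbb{E}\sup_t|\Delta_t|^2\le C(\varepsilon_{k,n}+\delta_n^2)$, where $\varepsilon_{k,n}$ collects the chattering gaps along $X^{n,\infty}$; it does not give $C_n\varepsilon_k$. The diagonal choice of $k(n)$ absorbs only $\varepsilon_{k,n}$. Since $\delta_n$ does not depend on $k$, you need $C\delta_n^2\to0$, so $C$ must be genuinely independent of $n$. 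That is exactly the claim left unjustified.

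Monotonicity of $\gamma(x):=x-\pi_{\bar D}(x)$ does remove the penalty from $\langle\Delta,\mathrm{d}\Delta\rangle$. But the drift gap enters Itô's formula as $2\int_0^t\langle\Delta_s,\beta^k_s\rangle\,\mathrm{d}s$, where $\beta^k_s:=b(s,X^{n,\infty}_s,\mu_s,\alpha^k_s)-\int_U b(s,X^{n,\infty}_s,\mu_s,u)\,Q'_s(\mathrm{d}u)$. This $\beta^k$ is of order one pointwise; only its primitive $G^k_t:=\int_0^t\beta^k_s\,\mathrm{d}s$ tends to zero. The crude bound $C\int_0^t|\Delta_s|\,\mathrm{d}s$ produces a $\sqrt{y}$ term, and Gronwall does not close. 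You have to integrate by parts:
\[
\int_0^t\langle\Delta_s,\beta^k_s\rangle\,\mathrm{d}s=\langle\Delta_t,G^k_t\rangle-\int_0^t\langle G^k_s,\mathrm{d}\Delta_s\rangle .
\]
Here $\mathrm{d}\Delta$ contains $-n\big(\gamma(X^n_s)-\gamma(X^{n,\infty}_s)\big)\mathrm{d}s$. So the $n$-uniformity comes from the bound $\sup_t|G^k_t|\,\big(|K^n|_{[0,T]}+|K^{n,\infty}|_{[0,T]}\big)$ together with the uniform total-variation estimate of Lemma~\ref{lemma1}, whose proof applies to both processes. It does not come from monotonicity.

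A cleaner repair is to compare $X^n$ with the penalized relaxed McKean--Vlasov process $Y^n$, driven by $(Q',M')$ with its own law flow. At fixed $n$ only chattering gaps then appear, and $n$-dependent constants are harmless under the diagonal choice; you can even skip Itô. All the $n$-uniformity moves into the Menaldi-type estimate $\mathbb{E}\sup_t|Y^n_t-X'_t|^2\to0$, using $W_{\mathbb{R}^d,2}(\mathrm{Law}(Y^n_t),\mu_t)^2\le\mathbb{E}|Y^n_t-X'_t|^2$. In either version, the penalization step must be this strong $L^2(\sup)$ convergence on $\Omega'$, not the in-law convergence from Step~1, because point~1) is read off the coupling.
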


\begin{proof}
Since $P \in \mathcal{R}^*_{ref(D)}(\mu)$ then by proposition \ref{rep martingale for Reflected dynamics}, there exists a filtered probability space $(\Omega', \mathcal{F}', \mathbb{F}', \mathbb{P}')$ supports a $d$-dimensional $ \mathcal{F}'_t$-adapted processes $(X, K)$ along with $ m $ orthogonal $\mathcal{F}'_t $-martingale measures $M= (M_1, \dots, M_m)$
on $U \times [0,T] $ with intensity $ Q_t(\mathrm{d}u)\mathrm{d}t $, satisfying $\mathbb{P}' \circ (X, K, Q)^{-1} = P$ and the following reflected SDE
\begin{equation*}
\mathrm{d}X_t+ \mathrm{d}K_t = \displaystyle\int_{U}^{} b(t,X_t,\mu_t, u)Q_t(\mathrm{d}u)\mathrm{d}t +\int_{U}^{}\sigma(t,X_t,\mu_t, u) M(\mathrm{d}u,\mathrm{d}t) .
\end{equation*}
According to Lemma~\ref{chattering lemma}, we may enlarge the probability space 
$(\Omega', \mathcal{F}', \mathbb{F}', \mathbb{P}')$ if necessary, and we keep the same notation for simplicity. 
On this extended space, there exist a Brownian motion $B$ and the 
$U$-valued processes $\alpha^n$ satisfying the two properties stated in 
Lemma~\ref{chattering lemma}.\\
For each $n \in \mathbb{N}$, let $X^{n}$ be the solution to the penalized SDE associated with the strict control $\alpha^{n}$, given by
\begin{equation*}
\mathrm{d}X^{n}_{t}
= b\big(t, X^{n}_{t}, \mu^{n}_{t}, \alpha^{n}_{t}\big)\, \mathrm{d}t
- n\big(X^{n}_{t} - \pi_{\bar{D}}(X^{n}_{t})\big)\, \mathrm{d}t
+ \sigma\big(t, X^{n}_{t}, \mu^{n}_{t}, \alpha^{n}_{t}\big)\, \mathrm{d}B_{t},
\end{equation*}
with initial condition $X^n_0 = x_0$, where $\mu^{n} = \mathbb{P}' \circ (X^{n})^{-1}$.\\
Applying Itô’s formula to $ X^{n}$ yields that the joint law of the state–control pair satisfies
\begin{equation*}
P^{n} := \mathbb{P}' \circ \left(X^{n},\, \delta_{\alpha^{n}_{t}}(\mathrm{d}u)\, \mathrm{d}t\right)^{-1}
\in \mathcal{R}_{n}(\mu^{n}).
\end{equation*}

Using the same arguments as in Lemmas~\ref{lemma2} and~\ref{lemma3}, together with Theorem~1.1 from~\cite[p.~354]{kushnerbook}, we conclude that the sequence of processes
$$
\Sigma^n := (X, X^n, M, \delta_{\alpha^n_t}(\mathrm{d}u)\, \mathrm{d}B_t, Q, \delta_{\alpha^n_t}(\mathrm{d}u)\, \mathrm{d}t,  K, -n\big(X^n_t - \pi_{\bar{D}}(X^n_t)\big)\, \mathrm{d}t),
$$
is tight.\\
By the Skorokhod representation theorem, there exists a probability space $(\hat{\Omega}, \hat{\mathcal{F}}, \hat{\mathbb{P}}) $, a sequence of random variables
$$
\Sigma'^n := (\tilde{X}^n, \hat{X}^n, \tilde{M}^n, \hat{M}^n, \tilde{Q}^n, \hat{Q}^n, \tilde{K}^n, \hat{K}^n),
$$
and a limit
$$
\Sigma' := (\tilde{X}, \hat{X}, \tilde{M}, \hat{M}, \tilde{Q}, \hat{Q}, \tilde{K}, \hat{K}),
$$
such that:
\begin{description}
    \item[P.1)] For each $ n \in \mathbb{N} $,
    $$
    Law(\Sigma^n) = Law(\Sigma'^n),
    $$
    i.e., the laws of the original and the copied processes coincide.
    
    \item[P.2)] There exists a subsequence $ \Sigma'^{n_k} $ (still denoted by $ \Sigma'^n $ for simplicity) such that
    $$
    \Sigma'^n \to \Sigma' \quad \text{almost surely under } \hat{\mathbb{P}}.
    $$
\end{description}
From property \textbf{P.1)}, it follows that the tuple $(\hat{X}^n, \hat{K}^n, \hat{M}^n, \hat{Q}^n) $ satisfies the penalized SDE:
\begin{equation}
\begin{cases}
\mathrm{d}\hat{X}^n_t + \mathrm{d}\hat{K}^n_t= \displaystyle\int_{U} b(t, \hat{X}^n_t, \mathcal{L}_{\hat{X}^n_t}, u)\, \hat{Q}^n_t(\mathrm{d}u)\, \mathrm{d}t 
+ \int_{U} \sigma(t, \hat{X}^n_t, \mathcal{L}_{\hat{X}^n_t}, u)\, \hat{M}^n(\mathrm{d}u, \mathrm{d}t) 
, \\
\hat{X}^n_0 = x_0.
\end{cases}
\end{equation}
Similarly, the processes \( (\tilde{X}^n, \tilde{K}^n, \tilde{M}^n, \tilde{Q}^n) \) satisfy:
\begin{equation}
\mathrm{d}\tilde{X}^n_t+ \mathrm{d}\tilde{K}^n_t = \displaystyle\int_{U} b(t, \tilde{X}^n_t, \mu_t, u)\, \tilde{Q}^n_t(\mathrm{d}u)\, \mathrm{d}t 
+ \int_{U} \sigma(t, \tilde{X}^n_t, \mu_t, u)\, \tilde{M}^n(\mathrm{d}u, \mathrm{d}t) 
.
\end{equation}
Passing to the limit and using property \textbf{P.2)} together with the arguments of \textbf{Step 1} in Subsection~\ref{subsection 4.2}, we obtain
$$
\mathrm{d}\hat{X}_t+\mathrm{d}\hat{K}_t
=\displaystyle\int_U b(t,\hat{X}_t,\mu_t,u)\,\hat{Q}_t(\mathrm{d}u)\,\mathrm{d}t
+\int_U \sigma(t,\hat{X}_t,\mu_t,u)\,\hat{M}(\mathrm{d}u,\mathrm{d}t)
.
$$
Applying Theorem~1.1 of \cite[page~354]{kushnerbook} yields the corresponding limit system for 
$(\tilde{X},\tilde{K},\tilde{M},\tilde{Q})$:
$$
\mathrm{d}\tilde{X}_t+\mathrm{d}\tilde{K}_t
=\displaystyle\int_U b(t,\tilde{X}_t,\mu_t,u)\,\tilde{Q}_t(\mathrm{d}u)\,\mathrm{d}t
+\int_U \sigma(t,\tilde{X}_t,\mu_t,u)\,\tilde{M}(\mathrm{d}u,\mathrm{d}t)
.
$$
By the chattering lemma (Lemma~\ref{chattering lemma}),
$$
Q^n:=\delta_{\alpha^{n}_{t}}(\mathrm{d}u)\, \mathrm{d}t\to Q \ \text{in }\mathcal{U},\quad \mathbb{P}'\text{-a.s.}\quad \text{and} 
\quad
M^n:=\delta_{\alpha^n_t}(\mathrm{d}u)\, \mathrm{d}B_t\to M \ \text{in }\mathcal{C}_{S'},\quad \mathbb{P}'\text{-a.s.}
$$
and therefore
$$
(Q^n,Q)\to(Q,Q)\ \text{in }\mathcal{U}^2,\quad \mathbb{P}'\text{-a.s.} \quad \text{and} 
\qquad
(M^n,M)\to(M,M)\ \text{in }\mathcal{C}_{S'}\times \mathcal{C}_{S'},\quad \mathbb{P}'\text{-a.s.}
$$
Using properties \textbf{P.1)} and \textbf{P.2)}, it follows that
$$
Law(Q^n, Q) = Law(\tilde{Q}^n, \hat{Q}^n) \quad \text{and} \quad Law(M^n, M) = Law(\tilde{M}^n, \hat{M}^n),
$$
and the sequence $(\tilde{Q}^n, \hat{Q}^n, \tilde{M}^n, \hat{M}^n)$ converges $\mathbb{P}'$-almost surely to $(\tilde{Q}, \hat{Q}, \tilde{M}, \hat{M})$.\\
By uniqueness of distributional limits, we obtain
$$
Law(\tilde{Q}, \hat{Q}) = Law(Q, Q) \quad \text{and} \quad Law(\tilde{M}, \hat{M}) = Law(M, M).
$$
Since $Law(Q, Q)$ and $Law(M, M)$ are supported on the diagonal, we conclude that
$$
\hat{\mathbb{P}}(\tilde{Q} = \hat{Q}) = 1 \quad \text{and} \quad \hat{\mathbb{P}}(\tilde{M} = \hat{M}) = 1.
$$
Since $(\tilde{X},\tilde{K})$ and $(\hat{X},\hat{K})$ solve the same reflected SDE with the same initial condition and are driven by $\hat{M}$ and $\hat{Q}$, pathwise uniqueness gives
$$
\hat{\mathbb{P}}\big( (\tilde{X},\tilde{K})=(\hat{X},\hat{K}) \big)=1.
$$
For point \textbf{1)}, we have
\begin{align*}
W_{\Omega, 2}(P^n,P)
&\le \mathbb{E}^{\mathbb{P}'}\left(
\sup_{t\le T}|X^n_t-X_t|^2
+ \sup_{t\le T}|K^n_t-K_t|^2
+ d_{\mathcal{U}}(Q^n,Q)^2 \right) \\
&= \mathbb{E}^{\hat{\mathbb{P}}}\left(
\sup_{t\le T}|\hat{X}^n_t-\tilde{X}^n_t|^2
+ \sup_{t\le T}|\hat{K}^n_t-\tilde{K}^n_t|^2
+ d_{\mathcal{U}}(\hat{Q}^n,\tilde{Q}^n)^2 \right) \\
&\xrightarrow[n\to\infty]{} 
\mathbb{E}^{\hat{\mathbb{P}}}\!\left(
\sup_{t\le T}|\hat{X}_t-\tilde{X}_t|^2
+ \sup_{t\le T}|\hat{K}_t-\tilde{K}_t|^2
+ d_{\mathcal{U}}(\hat{Q},\tilde{Q})^2 \right)=0.
\end{align*}
For point \textbf{2)},
\begin{align*}
|J(\mu,P)-J^n(\mu^n,&P^n)|
=\mathbb{E}^{\mathbb{P}'}\Bigg(
\Big|\int_0^T\int_U f(t,X_t,\mu_t,u)\,Q_t(\mathrm{d}u)\,\mathrm{d}t
 -\int_0^T\int_U f(t,X^n_t,\mu^n_t,u)\,Q^n_t(\mathrm{d}u)\,\mathrm{d}t \Big| \\
&\qquad + \Big|g(X_T,\mu_T)-g(X^n_T,\mu^n_T)\Big|
 + \Big|\int_0^T h(t, X_t, \mu_t)\,\mathrm{d}K_t - \int_0^T h(t, X^n_t,\mu^n_t )\,\mathrm{d}K^n_t\Big| \Bigg) \\
&= \mathbb{E}^{\hat{\mathbb{P}}}\Bigg(
\Big|\int_0^T\int_U f(t,\tilde{X}^n_t,\mu_t,u)\,\tilde{Q}^n_t(\mathrm{d}u)\,\mathrm{d}t
 -\int_0^T\int_U f(t,\hat{X}^n_t,\mu^n_t,u)\,\hat{Q}^n_t(\mathrm{d}u)\,\mathrm{d}t \Big| \\
&\qquad + \Big|g(\tilde{X}^n_T,\mu_T)-g(\hat{X}^n_T,\mu^n_T)\Big|
 + \Big|\int_0^T h(t,\tilde{X}^n_t,\mu_t )\,\mathrm{d}\tilde{K}^n_t - \int_0^T h(t,\hat{X}^n_t,\mu^n_t)\,\mathrm{d}\hat{K}^n_t\Big| \Bigg) \\
&\xrightarrow[n\to\infty]{} 
\mathbb{E}^{\hat{\mathbb{P}}}\Bigg(
\Big|\int_0^T\int_U f(t,\tilde{X}_t,\mu_t,u)\,\tilde{Q}_t(\mathrm{d}u)\,\mathrm{d}t
 -\int_0^T\int_U f(t,\hat{X}_t,\mu_t,u)\,\hat{Q}_t(\mathrm{d}u)\,\mathrm{d}t \Big| \\
&\qquad + \Big|g(\tilde{X}_T,\mu_T)-g(\hat{X}_T,\mu_T)\Big|
 + \Big|\int_0^T h(t,\tilde{X}_t,\mu_t)\,\mathrm{d}\tilde{K}_t - \int_0^T h(t, \hat{X}_t,\mu_t)\,\mathrm{d}\hat{K}_t\Big| \Bigg)=0,
\end{align*}
by continuity of $f$ and $g$ and the dominated convergence theorem.
\end{proof}

\bibliographystyle{abbrv}
\bibliography{bibliog}
\end{document}